\def\norm#1{\|#1 \|}
\def\inprod#1#2{\langle#1,\,#2 \rangle}
\def\cA{{\cal A}}
\def\cC{{\cal C}}
\def\cX{{\cal X}}
\def\cY{{\cal Y}}
\def\cL{{\cal L}}
\def\cI{{\cal I}}
\def\cO{{\cal O}}
\def\cT{{\cal T}}
\def\cM{{\cal M}}
\def\cU{{\cal U}}
\def\cJ{{\cal J}}
\def\cW{{\cal W}}
\def\dist{{\textup{dist}}}
\def\nn{{\nonumber}}
\def\idch{{\textup{int}(\textup{dom}\, h^*)}}
\def\dcp{{\textup{dom}\, p^*}}
\def\SSNAL {{\sc Ssnal }}
\def\SSNCG {{\sc Ssn }}
\newtheorem{theorem}{Theorem}
\newtheorem{assumption}{Assumption}
\newtheorem{remark}{Remark}
\newtheorem{definition}{Definition}
\newtheorem{proposition}{Proposition}
\newcommand{\red}[1]{\begin{color}{red}#1\end{color}}
\def\mc{\multicolumn}
\tikzstyle{vecArrow} = [thick, decoration={markings,mark=at position
\tikzstyle{innerWhite} = [semithick, white,line width=1.4pt, shorten >= 4.5pt]
\title{A highly efficient semismooth Newton   augmented Lagrangian method for solving Lasso problems\footnote{An earlier version of this paper was  made available in arXiv as arXiv:1607.05428. 	This research is
		supported in part by the Ministry of Education,
		Singapore, Academic Research Fund under Grant R-146-000-194-112.}}
\author{Xudong Li\thanks{Department of Mathematics, National University of Singapore, 10 Lower Kent Ridge Road, Singapore ({\tt matlixu@nus.edu.sg}).
},
\; Defeng Sun\thanks{Department  of  Mathematics  and  Risk  Management  Institute, National University of Singapore, 10 Lower Kent Ridge Road, Singapore ({\tt matsundf@nus.edu.sg}).
}
\  and \ Kim-Chuan Toh\thanks{Department of Mathematics, National University of Singapore, 10 Lower Kent Ridge Road, Singapore
({\tt mattohkc@nus.edu.sg}).
}
}
\date{April 27, 2017}
\begin{document}

\maketitle

\begin{abstract}
We develop a fast and robust algorithm for solving large scale convex composite optimization models with an emphasis on the $\ell_1$-regularized least squares regression (Lasso) problems. Despite the fact that there exist a large number of solvers in the literature for the Lasso problems, we found that no solver can efficiently handle difficult large scale regression problems with real data. By leveraging  on available error bound results
to realize the  asymptotic superlinear convergence property of the augmented Lagrangian algorithm, and by exploiting the second order sparsity  of the problem through the semismooth Newton method, we are able to propose an  algorithm, called {\sc Ssnal}, to efficiently solve the aforementioned difficult problems. Under very mild conditions, which hold automatically for Lasso problems, both the primal and the dual iteration sequences generated by {\sc Ssnal} possess a fast linear convergence rate, which can even be superlinear asymptotically. Numerical comparisons between our approach and a number of state-of-the-art solvers, on real data sets, are presented to demonstrate the high efficiency and robustness of our proposed algorithm in solving difficult large scale Lasso problems.


\end{abstract}
\noindent
\textbf{Keywords:}
 Lasso, sparse optimization, augmented Lagrangian, metric subregularity, semismoothness, Newton's method

\medskip
\noindent
\textbf{AMS subject classifications:}
 65F10, 90C06, 90C25, 90C31

\section{Introduction}
In this paper, we aim to  
{design}
 a highly efficient and robust algorithm for solving convex composite optimization problems including  the following $\ell_1$-regularized least squares (LS)
problem
\begin{eqnarray}
  \quad \min \left\{\frac{1}{2}\norm{\cA x -b}^2 + \lambda\norm{x}_1 \right\},
\label{eq-l1}
\end{eqnarray}
where $\cA:\cX\to\cY$ is a linear map whose adjoint is denoted as $\cA^*$, $b\in\cY$ and $\lambda >0$ are given data, and  $\cX$, $\cY$ are two real finite dimensional Euclidean spaces each equipped with an inner product $\inprod{\cdot}{\cdot}$ and its induced norm $\norm{\cdot}$.

With the {advent of convenient} automated data collection technologies, the Big Data era brings new challenges in analyzing massive data due to the inherent sizes -- large samples and high dimensionality \cite{Fan2014challenge}. In order to {respond to} these challenges, researchers have developed many
{new statistical tools to analyze such data}. Among these,
the $\ell_1$-regularized models are {arguably} the most intensively studied.
They are used in many applications, such as in compressive sensing, high-dimensional variable selection and image reconstruction, etc. Most notably, the model (\ref{eq-l1}), named as Lasso, was proposed in \cite{Tibshirani1996regression} and has been  used heavily in high-dimensional statistics and machine learning.
The model (\ref{eq-l1}) has also been studied in
{ the signal processing context under the name of basis pursuit denoising} \cite{chen1998atomic}.
In addition to its own importance in statistics and machine learning, Lasso problem \eqref{eq-l1} also appears as an inner subproblem of many important algorithms.
For example, {in recent papers \cite{Berg2008probing, Aravkin2016}}, a  level set method was proposed to solve a computationally more challenging reformulation of the Lasso problem, i.e.,
\[\min \left\{\norm{x}_1\,\mid\, \frac{1}{2}\norm{\cA x - b}^2 \le \sigma\right\}.\]
{The level set}
method
relies critically on the assumption that the following optimization problem, the same type as \eqref{eq-l1},
\[\min \left\{ \frac{1}{2}\norm{\cA x - b}^2 + \delta_{B_\lambda}(x)\right\}, \quad
B_\lambda = \{ x\mid \norm{\cdot}_1\le \lambda\},
\]
can be efficiently solved to high accuracy. The Lasso-type  optimization problems also appear as subproblems  in   various proximal Newton methods for solving convex composite optimization problems \cite{Byrd2016an,Lee2014proximal,Yue2016inexact}. Notably, in a broader sense, all these proximal Newton methods belong to  the class of algorithms studied in \cite{Fischer2002local}.

The above mentioned importance together with a wide range of applications of (\ref{eq-l1}) 
{has inspired many researchers to develop various algorithms}
for solving this problem and its equivalent reformulations.
 These algorithms can  roughly be divided into two categories. The first category consists of   algorithms that use only  the gradient  information, for example, accelerated proximal gradient (APG) method \cite{Nesterov1983a,Beck2009a},  GPSR \cite{Figueiredo2007gradient}, SPGL1 \cite{Berg2008probing}, SpaRSA \cite{Wright2009sparse},   FPC\_AS \cite{wen2010fast}, and NESTA \cite{Becker2011nesta}, to name only a few. Meanwhile, algorithms in the second category, including but not limited to mfIPM \cite{Fountoulakis14matrix}, SNF \cite{Milzarek14}, BAS \cite{Byrd2016a}, SQA \cite{Byrd2016an}, OBA \cite{Keskar2016a}, FBS-Newton \cite{Xiao2016semismooth},   utilize the second order information of the underlying problem in the algorithmic design to accelerate the convergence.
 {Nearly all of these second order information based solvers rely on certain nondegeneracy assumptions to guarantee the non-singularity of the corresponding inner linear systems.
 The only exception is
 the inexact interior-point-algorithm based solver mfIPM, which does not rely on the {nondegeneracy} assumption but require the availability of  appropriate pre-conditioners
to ameliorate the extreme ill-conditioning in the linear systems of the subproblems.}
 For nondegenerate problems, the solvers in the second category  generally work   quite well and usually outperform the  algorithms in the first category when high {accuracy} solutions are sought. 
 In this paper, we also aim to solve  the Lasso problems by making use of the second order  information. The novelty of our approach is that we do not need any {nondegeneracy}  assumption in our theory or computations. The core idea is to  analyze the fundamental nonsmooth structures in the problems  to formulate and  solve specific semismooth equations  with well conditioned symmetric and positive definite generalized Jacobian matrices, which consequently play a critical role {in}
 our algorithmic design.     When applied to solve  difficult large scale  sparse optimization problems,
{even for degenerate ones},
our approach can outperform the first order algorithms by a huge margin 
{regardless of
 whether  low or high accuracy solutions are sought}.
This is in a sharp   contrast to most of the other second order based solvers mentioned above,
{where their competitive advantages over first-order methods only become apparent when
high accuracy solutions are sought.}

Our proposed algorithm is a semismooth Newton augmented Lagrangian method {(in short, {\sc Ssnal})} for solving the dual of problem (\ref{eq-l1}) where the 
{sparsity property}
of the second order generalized Hessian is wisely 
{exploited}. This algorithmic framework is adapted from those appeared
in \cite{SDPNAL,jiang2014partial,YangST2015,QSDPNAL} for solving semidefinite programming problems where impressive numerical results have been reported. Specialized to the vector case, our {\sc Ssnal} possesses unique features that  are  not available in the semidefinite programming case. {It is these
combined unique features that allow} our algorithm
to converge at a very fast speed with very low computational costs at each step.  
{Indeed,}
for large scale sparse Lasso problems,
  our numerical experiments show that    {the proposed}
algorithm  needs at most {a few} dozens of outer iterations to  reach  solutions with the desired accuracy while all the inner  
{semismooth Newton subproblems}
can be solved very cheaply.
One reason for this  
{impressive}
performance is  that   the piecewise linear-quadratic structure of the Lasso problem \eqref{eq-l1} guarantees the
 asymptotic superlinear convergence of the augmented Lagrangian algorithm. Beyond the piecewise linear-quadratic case, we also study more general functions to guarantee this fast convergence rate.
More importantly, since there are several   desirable  properties including the strong convexity of the objective function  in the inner subproblems,  in each outer iteration we only need to execute a few (usually one to four)  semismooth Newton steps to solve the underlying subproblem. As will be shown later, for Lasso problems with sparse optimal solutions, the computational costs of performing these semismooth Newton steps can be made to be extremely cheap compared to other costs.
This seems to  be counter-intuitive  as normally one would  expect a  second order method
{to be computationally much more expensive than}
the first order methods at each step.
 Here, we make {this counter-intuitive  achievement} possible by carefully 
{exploiting}
 the second order sparsity   in the augmented Lagrangian  functions. Notably, our algorithmic framework not only works for models such as  Lasso, adaptive Lasso \cite{alasso} and elastic net \cite{enet}, but can also be applied to more general convex composite optimization problems.
The high performance of our algorithm also
serves to show that the  second order information, more specifically the nonsmooth second order information, can be and should be  incorporated  intelligently into the algorithmic design for large scale optimization problems.

The remaining parts of this paper are organized as follows. In the next section, we introduce some definitions and present preliminary results on the metric subregularity {of multivalued mappings}. We should emphasize here that these stability results play a pivotal role in the analysis of the  convergence rate of our algorithm. In Section 3, we propose an augmented Lagrangian algorithm to solve the general convex composite optimization model and analyze its asymptotic superlinear convergence. The semismooth Newton algorithm for solving the inner subproblems and the efficient implementation of the algorithm are also presented in this section. In Section 4,  we conduct extensive numerical experiments to
evaluate the performance of {\sc Ssnal} in solving various
Lasso problems. We conclude our paper in the final section.

\section{Preliminaries}
We discuss in this section some stability properties of convex composite optimization problems.
{It will become apparent later that
these} stability properties are {the} key ingredients for {establishing} the fast convergence of our augmented Lagrangian method.

Recall that $\cX$ and $\cY$ are 
two real finite dimensional Euclidean spaces.
For a given closed proper convex function $p:\cX\to(-\infty,+\infty]$, the proximal mapping $\textup{Prox}_p(\cdot)$ associated with $p$   is defined by
\[
\textup{Prox}_p(x) := \arg\min_{u\in \cX} \Big\{p(x) + \frac{1}{2}\|u-x\|^2\Big\}, \quad   \forall x\in \cX.
\]
We will often make use of the following Moreau identity
$
\textup{Prox}_{t p}( x) + t \textup{Prox}_{p^*/t}(x/t) = x,
$
where $t > 0$ is a given parameter. Denote $\textup{dist}(x,C) = \inf_{x'\in C}\norm{x-x'}$ for any $x\in \cX$ and any set $C\subset \cX$.

Let $F:\cX \rightrightarrows \cY$ be a multivalued mapping.
We define the graph of $F$ to be the set
\[{\rm gph} F: = \{(x,y)\in\cX\times\cY\mid y\in F(x)\}.\]
$F^{-1}$, the inverse of $F$, is the multivalued mapping from $\cY$ to $\cX$ whose graph is
$\{(y,x)\mid(x,y)\in {\rm gph} F\}$.
\begin{definition}[Error bound]\label{def:err-bound}
  Let $F:\cX \rightrightarrows \cY$ be a multivalued mapping and $y\in\cY$ satisfy $F^{-1}(y)\ne \emptyset$. $F$ is said to satisfy the error bound condition  for the point $ y$ with  modulus $\kappa \ge 0$, if  there exists $\varepsilon >0$  such that
  if $x\in\cX$ with ${\rm dist}( y,F(x)) \le \varepsilon$ then
  \begin{eqnarray}\label{def:eq-err-bound}
  {\rm dist}(x,F^{-1}( y )) \le \kappa \, {\rm dist}(  y, F(x)).
  \end{eqnarray}
\end{definition}
The above error bound condition was called the growth condition in \cite{luque1984asymptotic} and was used to analyze the local linear convergence properties of the proximal point algorithm.
Recall that $F:\cX\rightrightarrows\cY$ is called a polyhedral multifunction if its graph is
{the}
union of finitely many polyhedral convex sets. In \cite{robinson1981some}, Robinson established the following celebrated proposition on the error bound result for polyhedral multifunctions.
\begin{proposition}\label{thm:error_bound_polyhedral}
	Let $F$ be a polyhedral multifunction from $\cX$ to $\cY$.
Then, $F$ satisfies the error bound condition \eqref{def:eq-err-bound} for any point $y\in\cY$ satisfying $F^{-1}(y)\ne\emptyset$ with a  common modulus $\kappa \ge 0$.
\end{proposition}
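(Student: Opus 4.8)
The plan is to reduce the error bound to the \emph{uniform local upper Lipschitz continuity} of polyhedral multifunctions (Robinson's ``upper Lipschitz'' property), and then to establish the latter from Hoffman's classical error bound for systems of linear inequalities. First I would note that, since interchanging the two coordinates is a linear isomorphism of $\cX\times\cY$, the inverse $F^{-1}:\cY\rightrightarrows\cX$ is again a polyhedral multifunction. The target intermediate statement is: there is a constant $\kappa\ge 0$ such that for every $\bar y\in\cY$ with $F^{-1}(\bar y)\ne\emptyset$ one can find a neighbourhood $V\ni\bar y$ with $F^{-1}(u)\subseteq F^{-1}(\bar y)+\kappa\norm{u-\bar y}\,\mathbb{B}$ for all $u\in V$, where $\mathbb{B}$ is the closed unit ball of $\cX$. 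Granting this, the error bound follows by a projection trick: given $y$ with $F^{-1}(y)\ne\emptyset$, pick $\varepsilon>0$ with $\{u:\norm{u-y}\le\varepsilon\}\subseteq V$; for any $x$ with $\dist(y,F(x))\le\varepsilon$ the set $F(x)$ is nonempty (since $\dist(y,\emptyset)=+\infty$) and closed (being a finite union of polyhedral slices of $\mathrm{gph}\,F$), hence contains a point $u^\ast$ with $\norm{u^\ast-y}=\dist(y,F(x))$; then $x\in F^{-1}(u^\ast)$ with $u^\ast\in V$, and the inclusion gives $\dist(x,F^{-1}(y))\le\kappa\norm{u^\ast-y}=\kappa\,\dist(y,F(x))$, which is exactly \eqref{def:eq-err-bound} with a modulus not depending on $y$.

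To prove the intermediate statement I would write $\mathrm{gph}\,F^{-1}=\bigcup_{i=1}^{k}P_i$ with each $P_i=\{(u,x)\in\cY\times\cX:C_iu+D_ix\le c_i\}$ polyhedral, and argue piecewise. Fix $\bar y$ with $F^{-1}(\bar y)\ne\emptyset$ and split the indices according to whether the slice $S_i:=\{x:(\bar y,x)\in P_i\}$ is empty or not. When $S_i=\emptyset$, the polyhedron $P_i$ misses the polyhedron $L_{\bar y}:=\{\bar y\}\times\cX$; since $P_i-L_{\bar y}$ is again a closed polyhedron not containing the origin, the distance $\delta_i:=\dist(P_i,L_{\bar y})$ is strictly positive, and I would take $V$ to be a ball around $\bar y$ of radius smaller than $\min\{\delta_i:S_i=\emptyset\}$ (radius $1$, say, if there is no such $i$). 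Then for $u\in V$ and any $x\in F^{-1}(u)$ we have $(u,x)\in P_i$ for some $i$ that is forced to satisfy $S_i\ne\emptyset$. Applying Hoffman's lemma to the consistent system ``$C_iv+D_iw\le c_i,\ v=\bar y$'' in the variables $(v,w)$ --- whose coefficient matrix depends only on $(C_i,D_i)$, so its Hoffman constant $\theta_i$ is independent of $\bar y$ and of $c_i$ --- and observing that at $(u,x)\in P_i$ every inequality residual vanishes while the equality residual equals $\norm{u-\bar y}$, I obtain a point $(\bar y,\bar x)\in P_i\subseteq\mathrm{gph}\,F^{-1}$ with $\norm{x-\bar x}\le\theta_i\norm{u-\bar y}$, i.e., $\dist(x,F^{-1}(\bar y))\le\theta_i\norm{u-\bar y}$. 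Setting $\kappa:=\max_{1\le i\le k}\theta_i$ then yields the intermediate statement.

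The step I expect to be the crux is controlling the modulus \emph{uniformly in $y$}: the whole force of the proposition is that one $\kappa$ serves every feasible right-hand side. This rests on the structural fact that Hoffman's constant for the system above depends only on the finitely many fixed matrices $(C_i,D_i)$ describing the pieces of $\mathrm{gph}\,F^{-1}$, not on $c_i$ or on $\bar y$; the $y$-dependent quantities (the gaps $\delta_i$, hence the radius of $V$ and the choice of $\varepsilon$) are harmless because Definition~\ref{def:err-bound} already permits $\varepsilon$ to depend on $y$. The remaining ingredients --- closedness of polyhedra, the fact that Minkowski sums and linear images of polyhedra are polyhedra, and attainment of $\dist(y,F(x))$ over the closed set $F(x)$ --- are routine.
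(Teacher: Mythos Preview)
Your proof is correct. Note, however, that the paper does not actually supply a proof of this proposition: it is stated as Robinson's classical result and attributed to \cite{robinson1981some}. What you have written is, in essence, a reconstruction of Robinson's original argument --- reducing the error bound to the uniform local upper Lipschitz property of the polyhedral inverse $F^{-1}$, and deriving the latter piece-by-piece from Hoffman's lemma, with the key observation that the Hoffman constant depends only on the finitely many coefficient matrices $(C_i,D_i)$ and not on the right-hand sides. So there is no genuine methodological difference to compare; you have simply filled in what the paper chose to cite.
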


For  later uses, we present the following definition of 
{metric}  subregularity from Chapter 3 in  \cite{Dontchev2009}.
\begin{definition} [Metric subregularity]
	\label{def:subregular}
Let $F:\cX\rightrightarrows\cY$ be a multivalued mapping and $(\bar x,\bar y)\in  {\rm gph} F$.
	$F$  is said to be metrically subregular at $\bar x$ for $\bar y$ with modulus $\kappa \ge 0$, if there exist neighborhoods $U$ of $\bar x$ and $V$ of $\bar y$ such that
	\begin{equation*}
	\label{metric_sub}
	{\rm dist}(x, F^{-1}(\bar y))\le \kappa\, {\rm dist}(\bar y, F(x)\cap V), \quad \forall x\in U.
	\end{equation*}
\end{definition}
	From the above {definition}, we see that if $F:\cX\rightrightarrows\cY$ satisfies the error bound condition \eqref{def:eq-err-bound} for $\bar y$ with the modulus $\kappa$,  then it is metrically subregular at $\bar x$ for $\bar y$ with the same modulus $\kappa$ for any $\bar x\in F^{-1}(\bar y)$.

The following {definition on} essential smoothness is taken from \cite[Section 26]{rockafellar1970convex}.
\begin{definition}[Essential smoothness]
  A proper convex function $f$ {on $\cX$} is essentially smooth if $f$ is differentiable on $\textup{int}\,(\textup{dom}\,f) \neq \emptyset$
and $\lim_{{k\to\infty}}\norm{\nabla f(x^k)} = +\infty$ whenever $\{x^k\}$ is a sequence in $\textup{int}\,(\textup{dom}\,f)$ converging to a boundary point $x$ of $\textup{int}\,(\textup{dom}\,f)$.
\end{definition}
{In particular, a smooth convex function on $\cX$ is essentially smooth. Moreover, if a closed proper convex function $f$ is strictly convex on ${\rm dom}\, f$, then its conjugate $f^*$ is essentially smooth \cite[Theorem 26.3]{rockafellar1970convex}.}

Consider the following composite convex optimization model
\begin{equation}
  \label{prob:p_composite_op}
   \max - \left\{f(x): = h(\cA x) - \inprod{c}{x}  + p(x) \right\},
\end{equation}
where $\cA : \cX\to\cY$ is a linear map, $h:\cY\to\Re$ and $p:\cX\to(-\infty, +\infty]$ are two closed proper convex functions, $c\in \cX$ is a given vector. The dual of \eqref{prob:p_composite_op} can be written as
\begin{equation}
  \label{prob:d_composite_op}
  \min \left\{ h^*(y) + p^*(z) \, \mid \, \cA^*y + z = c\right\},
\end{equation}
where $g^*$ and $p^*$ are the Fenchel conjugate functions of
$g$ and $h$, respectively.
Throughout this section, we make the following blanket assumption on $h^*(\cdot)$ and $p^*(\cdot)$.

\begin{assumption}
\label{ass:chcp}
$h^*(\cdot)$ is essentially smooth and $p^*(\cdot)$ is either an indicator function $\delta_{P}(\cdot)$ or a support function $\delta^*_{P}(\cdot)$ for some nonempty polyhedral convex set $P\subseteq \cX$.
{Note that}
$\nabla h^*$ is locally Lipschitz continuous and  directionally differentiable  on $\textup{int}\,(\textup{dom}\,h^*)$.
\end{assumption}
Under Assumption \ref{ass:chcp}, by \cite[Theorem 26.1]{rockafellar1970convex}, we know that $\partial h^*(y) =\emptyset$ whenever $y\not\in \textup{int}\,(\textup{dom}\,h^*)$.
Denote by $l$ the Lagrangian function for \eqref{prob:d_composite_op}:
\begin{equation}
  \label{Lag-D}
  l(y,z,x) = h^*(y) + p^*(z) - \inprod{x}{\cA^*y + z-c}, \quad \forall\,(y,z,x)
  \in\cY\times\cX\times\cX.
\end{equation}
Corresponding to the closed proper convex function $f$ in the objective of \eqref{prob:p_composite_op} and the convex-concave function $l$ in {\eqref{Lag-D}},  define the maximal monotone operators $\cT_f$ and $\mathcal{T}_l$ \cite{rockafellar1976augmented}, by
\[\cT_f(x) := \partial f(x),\quad \mathcal{T}_l(y,z,x) := \{(y',z',x') \,|\, (y',z',-x')\in\partial l(y,z, x)\},\]
whose   inverse are given, respectively, by
\[
\cT_f^{-1}(x') := \partial f^*(x'), \quad
\mathcal{T}^{-1}_l(y',z',x') := \{(y,z,x)\,|\,(y',z',-x')\in\partial l(y,z, x)\}.
\]
{Unlike the case for $\cT_f$ \cite{Luo1992on,Tseng2009a,Zhou2017unified}, stability results of $\cT_l$ which correspond to the perturbations of both primal and dual solutions are very limited.}
Next, as a tool for studying the convergence rate of {\sc Ssnal}, we shall establish a theorem which reveals the metric subregularity of $\cT_l$ under some mild assumptions.

The KKT system associated with problem \eqref{prob:d_composite_op} is given as follows:
\begin{equation*}
  \label{KKT_dual_org}
  0 \in \partial h^*(y) - \cA x, \quad
 0 \in -x + \partial p^*(z), \quad
  0 = \cA^*y  + z -c,
  \quad (x,y,z)\in\cX\times\cY\times\cX.
\end{equation*}
Assume that the above KKT system has at least one solution.
This existence assumption together with the essentially smoothness assumption on $h^*$ implies that the above KKT system can be equivalently rewritten as
\begin{equation}
  \label{KKT_dual}
  0 = \nabla h^*(y) - \cA x, \quad
 0 \in -x + \partial p^*(z), \quad
  0 = \cA^*y  + z -c,
  \quad (x,y,z)\in\cX\times\idch\times\cX.
\end{equation}
Therefore, under the above assumptions, we only need to focus on $\textup{int}\,(\textup{dom}\,h^*)\times\cX$ when solving problem \eqref{prob:d_composite_op}.
 Let $(\bar y,\bar z)$ be an optimal solution to problem \eqref{prob:d_composite_op}. Then, we know that the set of the Lagrangian multipliers associated with $(\bar y, \bar z)$, denoted as $\cM(\bar y,\bar z)$, is nonempty.
Define the critical cone associated with \eqref{prob:d_composite_op} at $(\bar y, \bar z)$ as follows:
\begin{equation}
  \label{eq:critical_general_d}
  \cC(\bar y,\bar z): = \left\{(d_1,d_2) \in \cY\times\cX \mid
  \cA^*d_1 + d_2 = 0, \inprod{\nabla h^*(\bar y)}{d_1} + (p^*)'(\bar z; d_2) = 0, d_2 \in \cT_{{\rm dom}(p^*)}(\bar z)\right\},
\end{equation}
where $(p^*)'(\bar z; d_2)$ is the directional derivative of $p^*$ at $\bar z$ with respect to $d_2$, $\cT_{{\rm dom}(p^*)}(\bar z)$ is the tangent cone of ${\rm dom}(p^*)$ at $\bar z$. When the conjugate function $p^*$ is taken to be the indicator function of a nonempty polyhedral set $P$,  the above definition reduces directly to the standard definition of the critical cone in the nonlinear programming setting.
\begin{definition} [Second order sufficient condition]
Let $(\bar y, \bar z) \in \cY\times\cX$ be an optimal solution to problem \eqref{prob:d_composite_op} with $\cM(\bar y,\bar z) \ne \emptyset$. We say that the second order sufficient condition
for problem \eqref{prob:d_composite_op} holds at $(\bar y,\bar z)$ if
\[ \inprod{d_1}{(\nabla h^*)'(\bar y; d_1)} >0, \quad \forall\, 0 \ne (d_1,d_2)\in\cC(\bar y,\bar z).\]
\end{definition}
By building on the  proof ideas from the literature on nonlinear programming problems \cite{Dontchev1998,Klatte2000upper,Izmailov2012a},   we are able to prove the following result on the  metric subregularity of $\cT_l$. This allows us to prove the linear and even the  asymptotic superlinear convergence of the sequences generated by the {\sc Ssnal} {algorithm to be presented in the next section even} when the objective in problem \eqref{prob:p_composite_op} does not possess the piecewise linear-quadratic structure as in the Lasso problem \eqref{eq-l1}.

\begin{theorem}\label{thm:metric_sub_tl}
 Assume that the KKT system \eqref{KKT_dual} has at least one solution and denote it as $(\bar x,\bar y, \bar z)$. Suppose that Assumption  {\rm\ref{ass:chcp}} holds and that the second order sufficient condition for problem \eqref{prob:d_composite_op} holds at $(\bar y, \bar z)$.
 Then, $\cT_{l}$ is metrically subregular at $(\bar y,\bar z,\bar x)$ for the origin.
\end{theorem}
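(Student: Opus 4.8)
The plan is to recast metric subregularity of $\cT_l$ as a local error bound for the natural residual of the KKT system \eqref{KKT_dual}, and then to verify that error bound by a normalization (blow-up) argument in which the only genuinely second-order input is the hypothesized second order sufficient condition. First I would record two preliminaries. Writing out $\partial l$ shows that, for $y\in\idch$, the quantity $\dist\big(0,\cT_l(y,z,x)\big)$ is, up to a positive constant, equivalent to
\[
r(y,z,x)\ :=\ \norm{\nabla h^*(y)-\cA x}\,+\,\dist\big(x,\partial p^*(z)\big)\,+\,\norm{\cA^*y+z-c};
\]
and, by strong duality (available since \eqref{KKT_dual} is solvable) together with the Fenchel--Young equality, $\cT_l^{-1}(0)$ equals the product of the solution set of \eqref{prob:d_composite_op} and the solution set of \eqref{prob:p_composite_op}, so it is closed, convex, and contains $(\bar y,\bar z,\bar x)$. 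Hence it suffices to find $\kappa>0$ and a neighbourhood $\cU$ of $(\bar y,\bar z,\bar x)$ with $\dist\big((y,z,x),\cT_l^{-1}(0)\big)\le \kappa\, r(y,z,x)$ for all $(y,z,x)\in\cU$ with $y\in\idch$.

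I would then argue by contradiction: if no such $\kappa$ works, pick $(y^k,z^k,x^k)\to(\bar y,\bar z,\bar x)$, $y^k\in\idch$, not in $\cT_l^{-1}(0)$, with $r(y^k,z^k,x^k)=o(t_k)$, where $t_k:=\dist\big((y^k,z^k,x^k),\cT_l^{-1}(0)\big)\downarrow0$; let $(\hat y^k,\hat z^k,\hat x^k)$ be the metric projection onto $\cT_l^{-1}(0)$, so that $(\hat y^k,\hat z^k,\hat x^k)\to(\bar y,\bar z,\bar x)$, and normalize $(\Delta y^k,\Delta z^k,\Delta x^k):=t_k^{-1}\big((y^k,z^k,x^k)-(\hat y^k,\hat z^k,\hat x^k)\big)$, passing to a subsequence with limit $(\Delta y,\Delta z,\Delta x)$ of unit norm. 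Dividing the feasibility residual by $t_k$ gives $\cA^*\Delta y+\Delta z=0$. Using $\nabla h^*(\hat y^k)=\cA\hat x^k$ and the local Lipschitz continuity and directional differentiability of $\nabla h^*$ near $\bar y$, dividing the stationarity residual $\nabla h^*(y^k)-\nabla h^*(\hat y^k)=\cA(x^k-\hat x^k)+o(t_k)$ by $t_k$ gives $(\nabla h^*)'(\bar y;\Delta y)=\cA\Delta x$. Choosing $\tilde x^k\in\partial p^*(z^k)$ with $\norm{\tilde x^k-x^k}=o(t_k)$ and invoking the polyhedrality of $\partial p^*$ (its graph is a finite union of polyhedra, hence coincides near $(\bar z,\bar x)$ with a translate of its tangent cone) together with the monotonicity of $\partial p^*$, one extracts $\Delta z\in\cT_{{\rm dom}(p^*)}(\bar z)$, $(p^*)'(\bar z;\Delta z)=\inprod{\bar x}{\Delta z}$, and $\inprod{\Delta z}{\Delta x}\ge0$. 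Combining the first identity with $\cA^*\Delta y+\Delta z=0$ and $\nabla h^*(\bar y)=\cA\bar x$ yields $\inprod{\nabla h^*(\bar y)}{\Delta y}+(p^*)'(\bar z;\Delta z)=0$, so $(\Delta y,\Delta z)\in\cC(\bar y,\bar z)$; and $(\nabla h^*)'(\bar y;\Delta y)=\cA\Delta x$ gives $\inprod{\Delta y}{(\nabla h^*)'(\bar y;\Delta y)}=\inprod{\cA^*\Delta y}{\Delta x}=-\inprod{\Delta z}{\Delta x}\le0$.

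The second order sufficient condition now closes the loop. If $\Delta y\neq0$, then $(\Delta y,\Delta z)$ is a nonzero critical direction violating $\inprod{d_1}{(\nabla h^*)'(\bar y;d_1)}>0$, a contradiction; hence $\Delta y=0$, so $\Delta z=-\cA^*\Delta y=0$. The stationarity residual together with $\norm{y^k-\hat y^k}=o(t_k)$ and local Lipschitz continuity of $\nabla h^*$ forces $\cA\Delta x=0$, while $(0,\Delta x)\in\cT_{{\rm gph}\,\partial p^*}(\bar z,\bar x)$ gives $\bar x+s\Delta x\in\partial p^*(\bar z)$ for all small $s>0$; consequently $(\bar y,\bar z,\bar x+s\Delta x)$ is again a KKT point, i.e.\ it lies in $\cT_l^{-1}(0)$. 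Feeding this point into the projection inequality $\inprod{(y^k,z^k,x^k)-(\hat y^k,\hat z^k,\hat x^k)}{(\bar y,\bar z,\bar x+s\Delta x)-(\hat y^k,\hat z^k,\hat x^k)}\le0$ and letting $k\to\infty$ yields $s\norm{\Delta x}^2\le0$, so $\Delta x=0$, contradicting $\norm{(\Delta y,\Delta z,\Delta x)}=1$. This contradiction establishes the error bound, hence the claimed metric subregularity.

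The main obstacle I anticipate is the limiting step that produces $(\nabla h^*)'(\bar y;\Delta y)=\cA\Delta x$: one must justify that $t_k^{-1}\big(\nabla h^*(\hat y^k+t_k\Delta y^k)-\nabla h^*(\hat y^k)\big)\to(\nabla h^*)'(\bar y;\Delta y)$ even though both the base point $\hat y^k$ and the direction $\Delta y^k$ move, which requires exploiting the regularity built into Assumption \ref{ass:chcp} (for instance, through a directional second-order Taylor expansion of $h^*$ along the sequence) rather than mere pointwise directional differentiability; this is precisely the ingredient adapted from the nonlinear-programming error-bound analyses of \cite{Dontchev1998,Klatte2000upper,Izmailov2012a}. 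By comparison, the bookkeeping around the nonsmooth term $p^*$---reducing $\partial p^*$ locally to the tangent cone of its polyhedral graph and reading off the critical-cone relations and the sign $\inprod{\Delta z}{\Delta x}\ge0$---should be routine once Robinson's polyhedral error bound (Proposition \ref{thm:error_bound_polyhedral}) and monotonicity are invoked.
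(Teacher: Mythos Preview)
Your strategy differs from the paper's in a substantive way, and the difference is exactly where the argument has a gap. The paper proceeds in two decoupled phases. First, by a contradiction argument normalized \emph{only in $(y,z)$}, it shows the upper-Lipschitz estimate $\norm{(y(w),z(w))-(\bar y,\bar z)}=O(\norm{w})$ for solutions of the perturbed KKT system; crucially it never extracts a limit for the $x$-component, but instead uses the piecewise-linear structure of $\Pi_P$ (splitting into the two cases $p^*=\delta_P^*$ and $p^*=\delta_P$) to trap $(\nabla h^*)'(\bar y;\xi_1)$ in a closed polyhedral cone $\cA(\cC\cap L)$ orthogonal to $\xi_2$, so that the quadratic form vanishes exactly and SOSC yields the contradiction. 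Second, once $(y,z)$ is controlled, the $x$-estimate is obtained by applying Robinson's polyhedral error bound (Proposition~\ref{thm:error_bound_polyhedral}) to the piecewise-affine multiplier map $\theta(x)=\Theta_{\rm KKT}(x,\bar y,\bar z,0)$, which gives $\dist(x(w),\cM(\bar y,\bar z))=O(\norm{w})$ immediately.

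Your approach normalizes all three components at once and tries to handle $x$ via a projection/variational-inequality trick instead of Robinson's bound. The obstacle you flagged---the moving base point $\hat y^k$ in the $\nabla h^*$ expansion---is actually benign: SOSC at $(\bar y,\bar z)$ forces $(\bar y,\bar z)$ to be an isolated dual solution (any nearby dual solution produces a nonzero critical direction with $(\nabla h^*)'(\bar y;d_1)=0$), so for large $k$ one has $(\hat y^k,\hat z^k)=(\bar y,\bar z)$ and the base point is fixed. The genuine gap is the assertion $(0,\Delta x)\in\cT_{{\rm gph}\,\partial p^*}(\bar z,\bar x)$. You know $(z^k,\tilde x^k)\in{\rm gph}\,\partial p^*$ with $(z^k-\bar z)/t_k\to 0$ and $(\tilde x^k-\hat x^k)/t_k\to\Delta x$, but $\tilde x^k-\bar x = t_k\Delta x^k+(\hat x^k-\bar x)+o(t_k)$ and nothing in your setup bounds $(\hat x^k-\bar x)/t_k$: the projection $\hat x^k=\Pi_\Omega(x^k)$ can drift along $\Omega$ much farther from $\bar x$ than $\norm{x^k-\hat x^k}$ (think of $\Omega$ a line and $x^k$ approaching $\bar x$ with tangential drift). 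Since ${\rm gph}\,\partial p^*$ is only a finite union of polyhedra, not convex, the difference of two graph points need not be a tangent direction at $(\bar z,\bar x)$, and without this inclusion you cannot place $\bar x+s\Delta x$ back into $\partial p^*(\bar z)$ to feed the projection inequality. The cleanest repair is precisely the paper's second phase: once you have $(\Delta y,\Delta z)=0$, Lipschitz continuity gives $\norm{\theta(x^k)}=\norm{\Theta_{\rm KKT}(x^k,\bar y,\bar z,0)}=o(t_k)$, and Robinson's bound for the piecewise-affine map $\theta$ yields $\dist(x^k,\Omega)=o(t_k)$, contradicting $\norm{x^k-\hat x^k}\sim t_k$.
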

\begin{proof}
	First, we  claim  that there exists a neighborhood $\cU$ of $(\bar x, \bar y, \bar z)$ such that for any $w = (u_1, u_2, v) \in \cW: = \cY\times\cX\times\cX$ with $\norm{w}$ sufficiently small, any solution $(x(w),y(w),z(w))\in \cU$ of the perturbed KKT system
	 \begin{equation}\label{eq:perturbed_KKT}
	 0 = \nabla h^*(y) - \cA x - u_1, \quad 0 \in -x - u_2 + \partial p^*(z), \quad 0 = \cA^*y + z - c - v
	 \end{equation}
	 satisfies the following estimate
	 \begin{equation}
	 \label{eq:Lipyz}
	 \norm{(y(w),z(w)) - (\bar y,\bar z)} =  O(\norm{w}).
	 \end{equation}
For the sake of contradiction, suppose that our claim is not true.
Then, there exist some sequences $\{w^k { := (u_1^k,u_2^k,v^k)} \}$ and $\{(x^k,y^k,z^k)\}$ such that $w^k\to 0$, $ (x^k,y^k,z^k)  \to (\bar x,\bar y,\bar z)$, for each $k$ the point $(x^k,y^k,z^k)$ is a solution of \eqref{eq:perturbed_KKT} for $w = w^k$, and
\[  \norm{(y^k,z^k) - (\bar y,\bar z)} > \gamma_k \norm{w^k}\]
with some $\gamma_k >0$ such that $\gamma_k\to \infty$.
Passing onto a subsequence if necessary, we  assume that $\{\big((y^k,z^k) - (\bar y, \bar z)\big) / \norm{(y^k,z^k) - (\bar y, \bar z)}\}$ converges to some $\xi = (\xi_1,\xi_2)\in \cY\times\cX$, $\norm{\xi} = 1$. Then, setting $t_k = \norm{(y^k,z^k) - (\bar y, \bar z)} $ and passing to a   subsequence further if necessary, by the local Lipschitz continuity and the directional differentiability of $\nabla h^* (\cdot)$ at $\bar y$,
we know that for all $k$ sufficiently large
\begin{align*}
\nabla h^*(y^k) - \nabla h^*(\bar y) = &{} \nabla h^*(\bar y + t_k \xi_1) - \nabla h^*(\bar y) + \nabla h^*(y^k) - \nabla h^*(\bar y + t_k \xi_1) \\[5pt]
 = &{} t_k (\nabla h^*)'(\bar y; \xi_1) + o(t_k) + O(\norm{y^k - \bar y - t_k \xi_1})\\[5pt]
 = &{} t_k (\nabla h^*)'(\bar y; \xi_1) + o(t_k).
\end{align*}
Denote for each $k$, $\hat x^k := x^k + u_2^k$.
Simple calculations show that for all $k$ sufficiently large
\begin{equation} \label{eq:yxi1}
 0 = \nabla h^*(y^k) - \nabla h^*(\bar y) - \cA ( \hat x^k - \bar x) + \cA u_2^k - u_1^k
 = t_k (\nabla h^*)'(\bar y; \xi_1) + o(t_k) - \cA( \hat x^k - \bar x)
\end{equation}
and
\begin{equation} \label{eq:Atxi_1pxi_2}
0 = \cA (y^k - \bar y) + (z^k - \bar z) - v^k = t_k (\cA^*\xi_1 + \xi_2) + o(t_k). 
\end{equation}
Dividing both sides of equation \eqref{eq:Atxi_1pxi_2} by $t_k$ and then taking limits, we obtain
\begin{equation}\label{eq:atxi1xi2}
\cA^*\xi_1 + \xi_2 = 0,
\end{equation}
which further implies that
\begin{equation}
  \label{eq:gradchd1}
  \inprod{\nabla h^*(\bar y)}{\xi_1} =
  \inprod{\cA\bar x}{\xi_1} = -\inprod{\bar x}{\xi_2}.
\end{equation}
 Since
$\hat x^k \in \partial p^*(z^k)$, we know that for all $k$ sufficiently large,
\[z^k = \bar z + t_k \xi_2 + o(t_k) \in \dcp. \] That is, $\xi_2 \in \cT_{\dcp}(\bar z)$.

According to the structure of $p^*$, we separate our discussions into two cases.

{\bf Case I:}
There exists a nonempty polyhedral convex set $P$ such that $p^*(z) = \delta^*_P(z),\,\forall z\in\cX$. Then, for each $k$, it holds that
\[\hat x^k = \Pi_{P}(z^k + \hat x^k).\]
By \cite[Theorem 4.1.1]{facchinei2003finite}, we have
\begin{equation}\label{eq:ppc}
\hat x^k = \Pi_{P}(z^k + \hat x^k) = \Pi_{P}(\bar z + \bar x)
+ \Pi_{\cC}(z^k - \bar z + \hat x^k - \bar x)
= \bar x + \Pi_{\cC}(z^k - \bar z + \hat x^k - \bar x),
\end{equation}
where $\cC$ is the critical cone of $P$ at $\bar z + \bar x$, i.e.,
\[\cC \equiv \cC_P(\bar z + \bar x):= \cT_P(\bar x) \cap \bar z ^{\perp}.\]
Since $\cC$ is a polyhedral cone, we know from \cite[Proposition 4.1.4]{facchinei2003finite} that
$\Pi_{\cC}(\cdot)$ is a piecewise linear function, i.e., there exist  a positive integer $l$ and orthogonal projectors $B_1,\ldots,B_l$ such that for any $x\in\cX$,
\[\Pi_{\cC}(x) \in \left\{ B_1 x,\ldots, B_l x\right\}.\]
By restricting to {a}
 subsequence if necessary, we may further assume that there exists $1\le j' \le l$ such that for all $k\ge 1$,
\begin{equation}\label{eq:pl}
\Pi_{\cC}(z^k - \bar z + \hat x^k - \bar x) = B_{j'} (z^k - \bar z + \hat x^k - \bar x) = \Pi_{{\rm Range}(B_{j'})}(z^k - \bar z + \hat x^k - \bar x).
\end{equation}
Denote $L:={\rm Range}(B_{j'})$.
Combining \eqref{eq:ppc} and \eqref{eq:pl}, we get
\begin{equation*}\label{eq:critialzx}
\cC\cap L  \owns  (\hat x^k - \bar x) \perp (z^k - \bar z)   \in \cC^\circ\cap L^{\perp},
\end{equation*}
where $\cC^\circ$ is the polar cone of $\cC$.
{Since $\hat{x}^k-\bar{x} \in \cC$, we have}
$\inprod{\hat x^k - \bar x}{\bar z} = 0$, which, together with
$\inprod{\hat x^k - \bar x}{z^k - \bar z} = 0$, implies
$\inprod{\hat x^k - \bar x}{z^k} = 0.$
{Thus}
for all $k$ sufficiently large,
\[\inprod{z^k}{\hat x^k} = \inprod{z^k}{\bar x} = \inprod{\bar z + t_k \xi_2 + o(t_k)}{\bar x},\]
{and it follows that}
\begin{equation}
\label{eq:cpdizxi2}
\begin{aligned}
(p^*)'(\bar z; \xi_2) ={}& \lim_{k \to \infty} \frac{\delta_P^*(\bar z + t_k\xi_2) - \delta^*_P(\bar z)}{t_k}
=  \lim_{k \to \infty} \frac{\delta_P^*(z^k) - \delta^*_P(\bar z)}{t_k} \\[5pt]
={}& \lim_{k \to \infty}\frac{\inprod{z^k}{\hat x^k} - \inprod{\bar z}{\bar x}}{t_k}
= \inprod{\bar x}{\xi_2}.
\end{aligned}
\end{equation}
By \eqref{eq:atxi1xi2}, \eqref{eq:gradchd1} and \eqref{eq:cpdizxi2},  we know that
$(\xi_1,\xi_2) \in \cC(\bar y, \bar z)$. Since $\cC\cap\cL$ is a polyhedral convex cone, we know from \cite[Theorem 19.3]{rockafellar1970convex} that $\cA(\cC\cap\cL)$ is also a polyhedral convex cone, which, together with \eqref{eq:yxi1}, implies
\[(\nabla h^*)'(\bar y; \xi_1) \in \cA(\cC\cap L).\] Therefore, there exists a vector $\eta \in \cC\cap L$ such that
\[\inprod{\xi_1}{(\nabla h^*)'(\bar y; \xi_1)} = \inprod{\xi_1}{\cA \eta} = -\inprod{\xi_2}{\eta} = 0,\]
where the last equality 
{follows from the fact that $\xi_2\in \cC^\circ\cap L^{\perp}$.
Note that the last inclusion holds since the polyhedral convex cone
$\cC^\circ\cap L^{\perp}$ is closed } and \[\xi_2 = \lim_{k\to\infty} \frac{z^k - \bar z}{t_k} \in \cC^\circ \cap L^{\perp}.\]
{As $0\neq \xi = (\xi_1,\xi_2) \in \cC(\bar{y},\bar{z})$,
but $\inprod{\xi_1}{(\nabla h^*)'(\bar y; \xi_1)} =0$, this contradicts the assumption that the
second order sufficient condition holds for \eqref{prob:d_composite_op} at $(\bar{y},\bar{z})$.
Thus we have proved our claim
for Case I.}

{\bf Case II:} There exists a nonempty polyhedral convex set $P$ such that $p^*(z) = \delta_P(z),\,\forall z\in\cX$. Then, we know that for each $k$,
\[z^k = \Pi_{P}(z^k + \hat x^k) .\]
Since $(\delta_P)'(\bar z;d_2) = 0, \forall d_2\in\cT_{{\rm dom}(p^*)}(\bar z)$, the critical cone in \eqref{eq:critical_general_d} now takes the following form
\[  \cC(\bar y,\bar z) = \left\{(d_1,d_2) \in \cY\times\cX \mid
\cA^*d_1 + d_2 = 0, \inprod{\nabla h^*(\bar y)}{d_1} = 0, d_2 \in \cT_{{\rm dom}(p^*)}(\bar z)\right\}.\]
Similar to Case I, without loss of generality, we can assume that there exists a subspace $L$ such that for all $k\ge 1$,
\begin{equation*}
\cC\cap L  \owns  (z^k - \bar z) \perp (\hat x^k - \bar x) \in \cC^\circ\cap L^{\perp},
\end{equation*}
where \[\cC \equiv \cC_P(\bar z + \bar x):= \cT_P(\bar z) \cap \bar x ^{\perp}.\] Since $\cC\cap L$ is a polyhedral convex cone, we know
\[\xi_2 = \lim_{k\to \infty} \frac{z^k - \bar z}{t_k} \in \cC\cap L,\]
and consequently $\inprod{\xi_2}{\bar x} = 0$,
which, together with \eqref{eq:atxi1xi2} and \eqref{eq:gradchd1}, implies $\xi = (\xi_1,\xi_2) \in \cC(\bar y,\bar z)$. By \eqref{eq:yxi1} and the fact that $\cA(\cC^\circ\cap\cL^{\perp})$ is a polyhedral convex cone, we know that
\[(\nabla h^*)'(\bar y; \xi_1) \in \cA(\cC^\circ\cap L^{\perp}).\]
Therefore, there exists a vector $\eta \in \cC^\circ\cap L^{\perp}$ such that
\[{\inprod{\xi_1}{(\nabla h^*)^\prime (\bar y; \xi_1)}} = \inprod{\xi_1}{\cA \eta} = -\inprod{\xi_2}{\eta} = 0.\]
Since $\xi = (\xi_1,\xi_2) \neq 0$, we arrive at a contradiction {to the assumed second
order sufficient condition}. So our claim is also true for this case.

In summary, we have proven that there exists a neighborhood $\cU$ of $(\bar x, \bar y, \bar z)$ such that for any $w$ close enough to the origin, equation \eqref{eq:Lipyz} holds for any solution $(x(w),y(w),z(w))\in \cU$ to the perturbed KKT system \eqref{eq:perturbed_KKT}. Next we show that $\cT_l$ is metrically subregular at $(\bar y,\bar z, \bar x)$ for the origin.

Define the mapping $\Theta_{KKT}: \cX\times\cY\times\cX\times\cW \to \cY\times\cX\times\cX$ by
\begin{equation*}
\Theta_{KKT}(x,y,z,w): =  \left(
\begin{array}{c}
\nabla h^*(y) - \cA x - u_1 \\
z - {\rm Prox}_{p^*}(z + x + u_2) \\
\cA^*y + z -c- v \\
\end{array}
\right),\quad\forall (x,y,z,w)\in\cX\times\cY\times\cX\times\cW
\end{equation*}
and define the mapping $\theta:\cX\to\cY\times\cX\times\cX$ as follows:
\[\theta(x): = \Theta_{\rm KKT}(x,\bar y,\bar z, 0), \quad \forall x\in\cX.\]
Then, we have
$x\in\cM(\bar y,\bar z)$ if and only if $\theta(x)=0$. Since ${\rm Prox}_{p^*}(\cdot)$ is a piecewise affine function, $\theta(\cdot)$ is a piecewise affine function and thus a polyhedral multifunction. By using Proposition \ref{thm:error_bound_polyhedral}
and shrinking the neighborhood $\cU$ if necessary,  for any $w$ close enough to the origin and any solution $(x(w),y(w),z(w)) \in \cU$ of the perturbed KKT system \eqref{eq:perturbed_KKT}, we have
\begin{align*}
{\rm dist}(x(w), \cM(\bar y, \bar z)) ={}& O(\norm{\theta(x(w))}) \\[5pt]
={}& O(\norm{\Theta_{\rm KKT}(x(w),\bar y, \bar z, 0) - \Theta_{\rm KKT}(x(w), y(w), z(w), w)})\\[5pt]
= {}&O(\norm{w} + \norm{(y(w),z(w))-(\bar y, \bar z)}),
\end{align*}
which, together with \eqref{eq:Lipyz}, {implies the existence} of a constant $\kappa \ge 0$ such that
\begin{equation}\label{eq:upLip}
	 \norm{(y(w),z(w)) - (\bar y, \bar z)} + {\rm dist}(x(w),\cM(\bar y, \bar z)) \le \kappa \norm{w}. 
\end{equation}
Thus, by  Definition \ref{def:subregular}, we have proven  that $\cT_l$ is metrically subregular at $(\bar y,\bar z,\bar x)$ for the origin. The proof of {the} theorem is completed.
\end{proof}
\begin{remark}\label{rem:tftl}
	For convex piecewise linear-quadratic programming problems such as the $\ell_1$ and elastic net regularized LS problem, we know from {\rm \cite{sun1986thesis}} and {\rm \cite[Proposition 12.30]{rockafellar1998variational}} that the corresponding operators $\mathcal{T}_l$ and $\cT_f$ are polyhedral multifunctions and thus, by  Proposition \ref{thm:error_bound_polyhedral}, the error bound condition holds.
	Here we emphasize again that the error bound condition holds for the Lasso problem {\rm(\ref{eq-l1})}.
	Moreover, $\cT_f$, associated with the $\ell_1$ or elastic net regularized logistic regression model, i.e., for a given vector $b\in\Re^m$, the loss function $h:\Re^m\to\Re$ in problem \eqref{prob:p_composite_op} takes the 
 form
\[ h(y) = \sum_{i=1}^m \log(1 + e^{-b_i y_i}), \quad \forall y\in\Re^m,\]
also satisfies the error bound condition {\rm \cite{Luo1992on,Tseng2009a}}.
Meanwhile, from Theorem  {\rm\ref{thm:metric_sub_tl}}, we know that $\cT_l$ corresponding to the $\ell_1$ or the elastic net regularized logistic regression model is metrically subregular at any solutions to the KKT system
{\eqref{KKT_dual}}
 for the origin.
\end{remark}

\section{An augmented Lagrangian method with asymptotic superlinear convergence}
\label{FPPA}
Recall the general convex composite model \eqref{prob:p_composite_op}
\begin{equation}
({\bf P})\quad \max -\left\{f(x)= h(\cA x) - \inprod{c}{x} + p(x) \right\}
\nonumber 
\end{equation}
and its dual
\[
({\bf D})  \quad  \min \{  h^*(y) + p^*(z) \,|\,
\cA^*y + z = c\}.
\]
In this section, we shall propose an asymptotically superlinearly convergent augmented Lagrangian method for solving ({\bf P}) and ({\bf D}).
In this section, we make the following standing assumptions regarding the function $h$.
\begin{assumption}
  \label{assumption:h}
  \begin{enumerate}
    \item [{\rm (a)}]   $h:\cY\to\Re$ is a convex differentiable function
{whose gradient is $1/\alpha_h$-Lipschitz continuous, i.e.,}
\[\norm{\nabla h(y') - \nabla h(y)}\le (1/\alpha_h) \norm{y' - y },\quad \forall y',y\in\cY.\]
\item [{\rm (b)}] $h$ is essentially locally strongly
convex {\rm \cite{Goebel2008local}}, i.e., for any compact and convex set $K\subset \textup{dom}\,\partial h$,
there exists
$\beta_{K} >0$ such that
\[(1-\lambda)h(y') + \lambda h(y) \ge h((1-\lambda)y' + \lambda y) + \frac{1}{2}\beta_K\lambda(1-\lambda)\norm{y' - y}^2,\quad \forall y',y\in K,\]
for all $\lambda \in[0,1]$.
  \end{enumerate}
\end{assumption}
Many {commonly used} loss functions 
 in the machine learning literature satisfy the above mild assumptions.
For example, $h$ can be the loss function in the linear regression,  logistic regression and  Poisson regression models. {While the strict convexity of a convex function is closely related to the differentiability of its conjugate, the essential local strong convexity in Assumption \ref{assumption:h}(b) 
for a convex function 
was first proposed in \cite{Goebel2008local} to obtain a characterization of the local Lipschitz continuity of the gradient of its conjugate function.}

 The aforementioned assumptions on $h$ imply the following useful properties of $h^*$. Firstly, by \cite[Proposition 12.60]{rockafellar1998variational}, we know that $h^*$ is strongly convex with modulus $\alpha_h$. Secondly, by \cite[Corollary 4.4]{Goebel2008local}, we know that $h^*$ is essentially smooth and $\nabla h^*$ is locally Lipschitz continuous on $\textup{int}\,(\textup{dom}\,h^*)$. If the solution set to the KKT system associated with ({\bf P}) and ({\bf D}) is further assumed to be nonempty, similar to what we have discussed in the last section,  one only needs to focus on $\textup{int}\,(\textup{dom}\,h^*)\times\cX$ when solving ({\bf D}).
Given $\sigma>0$, the augmented Lagrangian function associated with ({\bf D}) is given by
\[
\cL_{\sigma}(y,z;x) := l(y,z,x)
+\frac{\sigma}{2}\norm{\cA^*y + z - c}^2, \quad \forall\,(y,z,x)
  \in\cY\times\cX\times\cX,
  \]
where the Lagrangian function $l(\cdot)$ is defined in \eqref{Lag-D}.

\subsection{{\sc Ssnal}: A semismooth Newton augmented Lagrangian algorithm for ({\bf D})}
The detailed steps of our algorithm \SSNAL for solving ({\bf D}) are given as follows. Since a semismooth Newton method will be employed to solve the subproblems involved in this
prototype of the inexact augmented Lagrangian method \cite{rockafellar1976augmented},  
{it is natural for us to call
our algorithm as} {\sc Ssnal}.

\medskip

\centerline{\fbox{\parbox{\textwidth}{
{\bf Algorithm {\sc Ssnal}}: {\bf An inexact augmented Lagrangian method for ({\bf D}).}
\\[5pt]
Let  $\sigma_0 >0$ be a given parameter.
Choose $(y^0,z^0,x^0)\in\idch\times\textup{dom}\, p^*\times\cX$.
For $k=0,1,\ldots$, perform the following steps in each iteration:
\begin{description}
  \item [Step 1.] Compute
  \begin{equation} \label{p2:alm-sub}
   (y^{k+1},z^{k+1}) \approx \arg\min \{\Psi_k (y,z):= \cL_{\sigma_k}(y,z;x^k)\}.
  \end{equation}
     \item[Step 2.] Compute
     $x^{k+1} = x^k - \sigma_k(\cA^*y^{k+1} + z^{k+1} - c)$
  and update $\sigma_{k+1} \uparrow \sigma_\infty\leq \infty$ .
\end{description}
}}}

\medskip
Next, we shall adapt the
results developed in \cite{rockafellar1976monotone,rockafellar1976augmented} and
\cite{luque1984asymptotic} to establish the global and local superlinear convergence of our algorithm.

Since the inner problems
can not be solved exactly, we use the following standard stopping criterion studied in \cite{rockafellar1976monotone,rockafellar1976augmented} for approximately solving (\ref{p2:alm-sub}) 
$$
({\rm A})\quad \Psi_k(y^{k+1},z^{k+1}) - \inf \Psi_k \le \varepsilon_k^2/2\sigma_k,\quad \sum_{k=0}^{\infty} \varepsilon_k < \infty.
$$
Now, we can state the global convergence of Algorithm {\sc Ssnal} from \cite{rockafellar1976monotone,rockafellar1976augmented} without much difficulty.
\begin{theorem}\label{thm:golbal-converge-ppa}
Suppose that  Assumption {\rm\ref{assumption:h}} holds and that the solution set to $({\bf P})$ is nonempty. Let  $\{ (y^k, z^k, x^k)\}$  be the infinite sequence generated by Algorithm \SSNAL with stopping criterion $({\rm A})$. Then, the sequence $\{x^k\}$ is bounded and converges to an optimal solution of $({\bf P})$. In addition, $\{(y^k,z^k)\}$ is also bounded and converges to the unique optimal solution
$(y^*,z^*)\in\idch\times\dcp$ of $({\bf D})$.
\end{theorem}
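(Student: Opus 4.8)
The plan is to reduce the statement to the convergence theory of the inexact proximal point algorithm (PPA) in \cite{rockafellar1976monotone}, via the equivalence, established in \cite{rockafellar1976augmented}, between the augmented Lagrangian method applied to $({\bf D})$ and the PPA applied to the maximal monotone operator $\cT_f=\partial f$ associated with $({\bf P})$. First I would record three preliminaries under Assumption~\ref{assumption:h}. (i) Since $h$ is finite-valued on all of $\cY$, the sum rule gives $\partial f(x)=\cA^*\nabla h(\cA x)-c+\partial p(x)$, so any optimal solution $\bar x$ of $({\bf P})$ yields a solution of the KKT system \eqref{KKT_dual} by setting $\bar y:=\nabla h(\cA \bar x)\in\idch$ (the inclusion holds because $\cA\bar x\in\partial h^*(\bar y)$, hence $\partial h^*(\bar y)\ne\emptyset$) and $\bar z:=c-\cA^*\bar y$; thus $({\bf D})$ is solvable, and by the strong convexity of $h^*$ (modulus $\alpha_h$) together with the constraint $\cA^*y+z=c$, its optimal solution $(y^*,z^*)$ is unique with $y^*\in\idch$ and $z^*\in\dcp$. (ii) For each $k$ the subproblem \eqref{p2:alm-sub} is well posed: minimizing $\cL_{\sigma_k}(y,z;x^k)$ over $z$ first is an ordinary proximal computation for $p^*$, and the resulting reduced function of $y$ inherits strong convexity (modulus $\alpha_h$) from $h^*$, hence is coercive with a unique minimizer, which lies in $\idch$ since $\partial h^*$ is empty off $\idch$. (iii) The stopping criterion $({\rm A})$ on the subproblem value gap translates, as in \cite{rockafellar1976augmented}, into $\norm{x^{k+1}-(I+\sigma_k\cT_f)^{-1}(x^k)}\le\varepsilon_k$ with $\sum_k\varepsilon_k<\infty$, i.e.\ exactly criterion $({\rm A})$ of \cite{rockafellar1976monotone} for the inexact PPA generating $\{x^k\}$.

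Given these, the claims about $\{x^k\}$ are immediate. Since the solution set of $({\bf P})$ is nonempty, $\cT_f^{-1}(0)\ne\emptyset$, so \cite[Theorem~1]{rockafellar1976monotone} yields that $\{x^k\}$ is bounded and converges to some $x^\infty\in\cT_f^{-1}(0)$, i.e.\ to an optimal solution of $({\bf P})$. In particular $\norm{x^{k+1}-x^k}\to 0$, and since $x^{k+1}-x^k=-\sigma_k(\cA^*y^{k+1}+z^{k+1}-c)$ with $\sigma_k\ge\sigma_0>0$, the residual $r^{k+1}:=\cA^*y^{k+1}+z^{k+1}-c\to 0$.

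It remains to prove $(y^k,z^k)\to(y^*,z^*)$, which I would do by a two-sided estimate on $h^*(y^{k+1})+p^*(z^{k+1})$. From $({\rm A})$, $\inf\Psi_k\le\Psi_k(y^*,z^*)=h^*(y^*)+p^*(z^*)$ (the cross and penalty terms vanish at the feasible $(y^*,z^*)$), and $\Psi_k(y^{k+1},z^{k+1})\ge h^*(y^{k+1})+p^*(z^{k+1})-\inprod{x^k}{r^{k+1}}$, giving the upper bound $h^*(y^{k+1})+p^*(z^{k+1})\le h^*(y^*)+p^*(z^*)+\inprod{x^k}{r^{k+1}}+\varepsilon_k^2/2\sigma_k$. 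For the lower bound, the strong convexity of $h^*$ at $y^*$ and the subgradient inequality for $p^*$ at $z^*$ with subgradient $x^\infty$ (recall $x^\infty\in\partial p^*(z^*)$ and $\nabla h^*(y^*)=\cA x^\infty$ from the KKT relations) give $h^*(y^{k+1})+p^*(z^{k+1})\ge h^*(y^*)+p^*(z^*)+\inprod{x^\infty}{r^{k+1}}+\tfrac{\alpha_h}{2}\norm{y^{k+1}-y^*}^2$, after using $\cA^*y^*+z^*=c$ to combine the linear terms. Subtracting, $\tfrac{\alpha_h}{2}\norm{y^{k+1}-y^*}^2\le\inprod{x^k-x^\infty}{r^{k+1}}+\varepsilon_k^2/2\sigma_k\to 0$, so $y^{k+1}\to y^*$; then $z^{k+1}=r^{k+1}+c-\cA^*y^{k+1}\to c-\cA^*y^*=z^*$, and boundedness of $\{(y^k,z^k)\}$ follows a posteriori.

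The step I expect to require the most care is (iii): making the equivalence between criterion $({\rm A})$ on the augmented Lagrangian subproblem and the inexact PPA criterion fully rigorous, in particular checking that the partial minimization over the block $z$ does not spoil the strong-convexity constant $\alpha_h$ used to pass from a value-gap bound to an iterate-distance bound, and tracking that $y^{k+1}\in\idch$ throughout. This is essentially \cite[\S4--5]{rockafellar1976augmented} adapted to the present two-block splitting; everything else is either a direct citation of \cite{rockafellar1976monotone,rockafellar1976augmented} or the short strong-convexity computation above.
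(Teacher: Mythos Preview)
Your proposal is correct, and its overall architecture matches the paper's: both reduce to Rockafellar's inexact augmented Lagrangian/PPA theory \cite{rockafellar1976monotone,rockafellar1976augmented} to get the boundedness and convergence of $\{x^k\}$ to an optimal solution of $({\bf P})$, after first checking that the KKT system is solvable and that $({\bf D})$ has a unique optimal solution by strong convexity of $h^*$.

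The genuine difference is in how the convergence of $\{(y^k,z^k)\}$ is obtained. The paper simply invokes \cite[Theorem~4]{rockafellar1976augmented}: once $({\bf D})$ is known to have a \emph{unique} optimal solution, that theorem directly yields boundedness and convergence of the primal (here $(y^k,z^k)$) sequence of the augmented Lagrangian scheme. You instead give a self-contained quantitative argument: you sandwich $h^*(y^{k+1})+p^*(z^{k+1})$ between an upper bound coming from criterion $({\rm A})$ evaluated at the feasible $(y^*,z^*)$, and a lower bound coming from the strong convexity of $h^*$ and the subgradient inequality for $p^*$ at the KKT multiplier $x^\infty$, and then extract $\tfrac{\alpha_h}{2}\norm{y^{k+1}-y^*}^2\le\inprod{x^k-x^\infty}{r^{k+1}}+\varepsilon_k^2/2\sigma_k\to 0$. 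This is more elementary and makes the role of the modulus $\alpha_h$ explicit; it also avoids having to unpack exactly what \cite[Theorem~4]{rockafellar1976augmented} says in the present two-block setting. The paper's route is shorter and leans entirely on the classical reference. Your preliminary (i), constructing $(\bar y,\bar z)$ directly from a primal optimum via $\bar y=\nabla h(\cA\bar x)$, is a slightly different way to reach the same conclusion the paper gets from Fenchel's duality theorem; both are fine. Your concern about step (iii) is well placed but not a gap: the reduction of criterion $({\rm A})$ on $\Psi_k$ to the PPA criterion on $\{x^k\}$ is exactly \cite[Proposition~6 and \S5]{rockafellar1976augmented}, and the partial minimization over $z$ preserves the modulus $\alpha_h$ because the infimal convolution of an $\alpha_h$-strongly convex function with a convex function remains $\alpha_h$-strongly convex in $y$.
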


\begin{proof}
Since the solution set to ({\bf P}) is assumed to be nonempty, the optimal value of ({\bf P}) is finite. From Assumption \ref{assumption:h}(a), we have that ${\rm dom}\,h = \cY$ and $h^*$ is strongly convex \cite[Proposition 12.60]{rockafellar1998variational}. Then, by Fenchel's Duality Theorem \cite[Corollary 31.2.1]{rockafellar1970convex}, we know that the solution set to ({\bf D}) is nonempty and the optimal value of ({\bf D}) is finite and equals to the optimal value of ({\bf P}). That is, the solution set to the KKT system associated with ({\bf P}) and ({\bf D}) is nonempty. The uniqueness of the optimal solution $(y^*,z^*)\in\idch\times\cX$ of ({\bf D}) then follows directly from the strong convexity of $h^*$. By combining this uniqueness with \cite[Theorem 4]{rockafellar1976augmented}, one can easily obtain the boundedness of $\{(y^k,z^k)\}$ and other desired results {readily.} 
\end{proof}

We need the the following stopping criteria for the local convergence analysis:
\begin{align*}
&({\rm B1})\quad \Psi_k(y^{k+1},z^{k+1}) - \inf \Psi_k \le (\delta_k^2/2\sigma_k) \norm{ x^{k+1} - x^k}^2, \quad
\sum_{k=0}^\infty\delta_k < +\infty, \\[5pt]
&({\rm B2})\quad\textup{dist}(0,\partial \Psi_k(y^{k+1},z^{k+1})) \le (\delta'_k/\sigma_k)\norm{ x^{k+1} - x^k}, \quad
0\le \delta'_k \to 0.
\end{align*}
where
$ x^{k+1} := x^k + \sigma_k (\cA^*y^{k+1} + z^{k+1}-c).$

\begin{theorem}\label{thm:local-linear}
Assume that Assumption {\rm\ref{assumption:h}} holds and that the solution set $\Omega$ to $({\bf P})$ is nonempty. Suppose that  $\cT_f$ satisfies the error bound
condition \eqref{def:eq-err-bound}  for the origin with modulus $a_f$.
Let  $\{ (y^k, z^k, x^k)\}$  be any infinite sequence generated by Algorithm \SSNAL with stopping criteria $({\rm A})$ and $({\rm B1})$. Then, the sequence $\{x^k\}$ converges to $x^*\in\Omega$ and for all $k$ sufficiently large,
\begin{equation}\label{eq:converge_x}
\dist(x^{k+1}, \Omega) \; \leq\;
\theta_k\dist(x^{k}, \Omega), 
\end{equation}
where { $\theta_k =\big(a_f(a_f^2 + \sigma_k^2)^{-1/2} + 2\delta_k \big)(1 - \delta_k)^{-1} \to \theta_\infty = a_f(a_f^2 + \sigma_{\infty}^2)^{-1/2} <1$} as $k\to+\infty$. Moreover, the sequence $\{(y^k,z^k)\}$ converges to the optimal unique solution $(y^*,z^*)\in\idch\times\dcp$ to $({\bf D})$.

Moreover, if $\cT_l$ is metrically subregular at $( y^*,z^*, x^*)$ for the origin with modulus $a_l$ and the stopping criterion $({\rm B2})$ is also used, then for all $k$ sufficiently large,
\begin{equation*}
\norm{(y^{k+1},z^{k+1}) - (y^*,z^*)} \le \theta_k' \norm{x^{k+1} - x^k}, 
\end{equation*}
where $\theta_k' = a_l(1+\delta_k')/\sigma_k$ with $\lim_{k\to \infty} \theta_k' = a_l/\sigma_{\infty}$.
\end{theorem}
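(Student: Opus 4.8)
The plan is to reduce the statement to Rockafellar's convergence theory for the inexact proximal point algorithm (PPA) together with the error-bound refinement of Luque \cite{luque1984asymptotic}, treating the metric subregularity of $\cT_l$ (which enters only as a hypothesis for the second assertion, and is already available via Theorem~\ref{thm:metric_sub_tl}) as the sole genuinely new ingredient. The key structural fact, due to Rockafellar \cite{rockafellar1976augmented}, is that Algorithm~{\sc Ssnal} applied to $({\bf D})$ is precisely the inexact PPA for $\cT_f = \partial f$ associated with $({\bf P})$: the exact update at iteration $k$ would be $\hat x^{k+1} := (I + \sigma_k\cT_f)^{-1}(x^k) = {\rm Prox}_{\sigma_k f}(x^k)$, and $x^{k+1} = x^k - \sigma_k(\cA^*y^{k+1} + z^{k+1} - c)$ is an inexact version of it. The quantitative bridge is the envelope inequality $\frac{1}{2\sigma_k}\|x^{k+1} - \hat x^{k+1}\|^2 \le \Psi_k(y^{k+1},z^{k+1}) - \inf\Psi_k$, which I would derive by noting that, as a function of the single variable $s := \cA^*y + z$, $\cL_{\sigma_k}(y,z;x^k)$ dominates the $\sigma_k$-strongly convex function $s \mapsto q(s) - \inprod{x^k}{s - c} + \frac{\sigma_k}{2}\|s - c\|^2$, where $q(s) := \inf\{h^*(y) + p^*(z) : \cA^*y + z = s\}$, whose minimizer $\hat s$ satisfies $x^k - \sigma_k(\hat s - c) = \hat x^{k+1}$, while $x^{k+1} = x^k - \sigma_k(s^{k+1} - c)$ with $s^{k+1} := \cA^*y^{k+1} + z^{k+1}$. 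Consequently $({\rm A})$ implies Rockafellar's PPA criterion $\|x^{k+1} - \hat x^{k+1}\| \le \varepsilon_k$ with $\sum_k \varepsilon_k < \infty$, and $({\rm B1})$ implies $\|x^{k+1} - \hat x^{k+1}\| \le \delta_k\|x^{k+1} - x^k\|$.

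\emph{First assertion.} Since $\Omega = \cT_f^{-1}(0) \ne \emptyset$ and $\cT_f$ satisfies the error bound \eqref{def:eq-err-bound} at the origin with modulus $a_f$, that is, ${\rm dist}(x,\Omega)\le a_f\,{\rm dist}(0,\partial f(x))$ for $x$ near $\Omega$, I would invoke the linear-rate theorem for the inexact PPA (Luque \cite{luque1984asymptotic}; cf. Rockafellar \cite{rockafellar1976monotone}) to get $x^k \to x^*$ for some $x^* \in \Omega$ together with ${\rm dist}(x^{k+1},\Omega) \le \theta_k\,{\rm dist}(x^k,\Omega)$ for all $k$ large, with $\theta_k = (a_f(a_f^2+\sigma_k^2)^{-1/2} + 2\delta_k)(1-\delta_k)^{-1}$; since $\delta_k\to 0$ and $\sigma_k\uparrow\sigma_\infty$, one has $\theta_k \to a_f(a_f^2+\sigma_\infty^2)^{-1/2} < 1$. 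The convergence of $\{(y^k,z^k)\}$ to the unique $(y^*,z^*)\in\idch\times\dcp$ is already furnished by Theorem~\ref{thm:golbal-converge-ppa} (criterion $({\rm A})$ and Assumption~\ref{assumption:h} being in force, with uniqueness from the strong convexity of $h^*$); in particular $\|x^{k+1} - x^k\| \to 0$.

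\emph{Second assertion.} Here I would read the inexact subproblem solution as a perturbed KKT system. By $({\rm B2})$ there is $d^{k+1} = (d_1^{k+1}, d_2^{k+1}) \in \partial\Psi_k(y^{k+1},z^{k+1})$ with $\|d^{k+1}\| \le (\delta'_k/\sigma_k)\|x^{k+1} - x^k\|$; writing $\partial\Psi_k = \partial_{(y,z)}\cL_{\sigma_k}(\cdot,\cdot;x^k)$ and substituting $x^{k+1} = x^k - \sigma_k(\cA^*y^{k+1} + z^{k+1} - c)$ gives $d_1^{k+1} = \nabla h^*(y^{k+1}) - \cA x^{k+1}$ and $d_2^{k+1} \in \partial p^*(z^{k+1}) - x^{k+1}$, while the $x$-update itself gives $\cA^*y^{k+1} + z^{k+1} - c = (x^k - x^{k+1})/\sigma_k$. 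Hence $(d_1^{k+1}, d_2^{k+1}, (x^k - x^{k+1})/\sigma_k) \in \cT_l(y^{k+1},z^{k+1},x^{k+1})$, so ${\rm dist}(0,\cT_l(y^{k+1},z^{k+1},x^{k+1})) \le \|d^{k+1}\| + \|x^{k+1} - x^k\|/\sigma_k \le \frac{1+\delta'_k}{\sigma_k}\|x^{k+1} - x^k\|$. Because $x^{k+1}\to x^*$, $(y^{k+1},z^{k+1})\to(y^*,z^*)$ and $\|x^{k+1}-x^k\|\to 0$, for $k$ large the triple $(y^{k+1},z^{k+1},x^{k+1})$ lies in the neighborhood $U$ of $(y^*,z^*,x^*)$ and the residual above lies in the neighborhood $V$ of the origin from Definition~\ref{def:subregular}; moreover, by uniqueness of the dual solution, every point of $\cT_l^{-1}(0)$ has $(y^*,z^*)$ as its $(y,z)$-component, whence ${\rm dist}((y^{k+1},z^{k+1},x^{k+1}),\cT_l^{-1}(0)) \ge \|(y^{k+1},z^{k+1}) - (y^*,z^*)\|$. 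Metric subregularity of $\cT_l$ at $(y^*,z^*,x^*)$ for the origin then yields $\|(y^{k+1},z^{k+1}) - (y^*,z^*)\| \le a_l\frac{1+\delta'_k}{\sigma_k}\|x^{k+1} - x^k\| = \theta'_k\|x^{k+1} - x^k\|$ with $\theta'_k\to a_l/\sigma_\infty$.

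\emph{Expected main obstacle.} Since the metric subregularity of $\cT_l$ is supplied to us, the real work is the bookkeeping that glues the two theories together: verifying the envelope inequality so that the augmented-Lagrangian stopping rules $({\rm A})$ and $({\rm B1})$ translate into the PPA rules of \cite{rockafellar1976monotone,luque1984asymptotic}, carrying out the perturbed-KKT computation for $({\rm B2})$ with the correct signs, and---most delicately---checking that the iterates eventually enter the prescribed neighborhoods $U$ and $V$, so that the local estimates of Luque and of Definition~\ref{def:subregular} are genuinely applicable.
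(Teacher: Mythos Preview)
Your proposal is correct and follows essentially the same route as the paper: for the first assertion you invoke Rockafellar's identification of the augmented Lagrangian method with the inexact PPA on $\cT_f$ together with Luque's linear-rate refinement, and for the second you bound ${\rm dist}(0,\cT_l(y^{k+1},z^{k+1},x^{k+1}))$ by $(1+\delta'_k)\sigma_k^{-1}\|x^{k+1}-x^k\|$ and feed this into the metric subregularity hypothesis. The paper's own proof is terser---it simply cites \cite[Theorem~2.1]{luque1984asymptotic}, \cite[Proposition~7, Theorem~5]{rockafellar1976augmented} and Theorem~\ref{thm:golbal-converge-ppa} for the first part, and for the second part appeals directly to the estimate~(4.21) in \cite{rockafellar1976augmented} rather than rederiving the perturbed-KKT calculation---but the underlying argument is the same as yours.
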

 \begin{proof}
 The first part of the theorem follows from  \cite[Theorem 2.1]{luque1984asymptotic},
 \cite[Proposition 7, Theorem 5]{rockafellar1976augmented} and Theorem \ref{thm:golbal-converge-ppa}.
To prove the second part, we recall that if $\cT_l$ is metrically subregular at $(y^*,z^*, x^*)$ for the origin with the modulus $a_l$ and $(y^k,z^k,x^k)\to (y^*,z^*,x^*)$,
then for all $k$ sufficiently large,
\[\norm{(y^{k+1},z^{k+1}) -(y^*,z^*)} + {\rm dist}(x^{k+1},\Omega) \le a_l\, {\dist }(0,\cT_l(y^{k+1},z^{k+1},x^{k+1})).\]
Therefore, by the estimate (4.21) in \cite{rockafellar1976augmented} and the  stopping criterion $({\rm B2})$, we obtain that for all $k$ sufficiently large,
\[\norm{(y^{k+1},z^{k+1}) -(y^*,z^*)} \le a_l(1 + \delta_k')/\sigma_k\norm{x^{k+1} -x^k}.\]
This completes the proof for Theorem \ref{thm:local-linear}.
\end{proof}

\begin{remark}
	\label{rmk:convergence_yz}
	{Recently advances in \cite{Cui2016} reveal the asymptotic
	R-superliner convergence of the dual iteration sequence $\{(y^k,z^k)\}$. 
	Indeed, from \cite[Proposition 4.1]{Cui2016}, under the same conditions of Theorem \ref{thm:local-linear}, we have that for $k$ sufficiently large, 
	\begin{equation}\label{eq:converge_yz}
	 \norm{(y^{k+1},z^{k+1}) - (y^*,z^*)} \le \theta_k' \norm{x^{k+1} - x^k} \le \theta_k' (1 -\delta_k)^{-1} {\rm dist}(x^k,\
	\Omega),
	\end{equation}
	where $\theta_k' (1 -\delta_k)^{-1} \to a_l/\sigma_{\infty}$.
	Then, if $\sigma_{\infty} = \infty$, inequalities \eqref{eq:converge_x}
	and \eqref{eq:converge_yz} imply that $\{x^k\}$ and $\{(y^k,z^k)\}$ converge Q-superlinearly  and  R-superlinearly, respectively.}
\end{remark}
{We should emphasize here that by
combining} Remarks \ref{rem:tftl} and \ref{rmk:convergence_yz} and Theorem \ref{thm:local-linear}, 
{our
Algorithm {\sc Ssnal} is guaranteed to produce an asymptotically superlinearly convergent sequence when used to solve $({\bf D})$ for} many commonly used regularizers and loss functions.
{In particular, the {\sc Ssnal} algorithm is asymptotically superlinearly convergent when
applied to the dual of \eqref{eq-l1}.
}

\subsection{Solving the augmented Lagrangian subproblems}

Here we shall propose an efficient semismooth Newton  algorithm to solve the inner subproblems 
{in the}
augmented Lagrangian method (\ref{p2:alm-sub}). That is, for some fixed $\sigma >0$ and $\tilde x\in \cX$, we consider to solve
\begin{equation}
  \label{sub_alm}
  \min_{y,z}\; \Psi(y,z): = \cL_{\sigma}(y,z; \tilde x).
\end{equation}
Since $\Psi(\cdot,\cdot)$ is a strongly convex function, we have that, for any
$\alpha \in \Re$, the level set $\cL_{\alpha}:=\{(y,z)
\in \textup{dom}\, h^* \times\textup{dom}\, p^* \,\mid\,
\Psi(y,z)\le \alpha\}$ is a closed and bounded convex set.
Moreover, problem (\ref{sub_alm}) admits a unique optimal solution denoted as $(\bar y,\bar z)\in \textup{int}(\textup{dom}\, h^*)\times \textup{dom}\, p^*$.

Denote, for any $y\in\cY$,
\begin{align*}
 \psi(y) :={}& \inf_{z} \Psi (y,z)  \\
={}& h^*(y)
+ p^*(\textup{Prox}_{p^*/ \sigma}(\tilde x/\sigma - \cA^*y + c))
+ \frac{1}{2\sigma}\norm{\textup{Prox}_{\sigma p}(\tilde x - \sigma (\cA^*y - c))}^2 - \frac{1}{2\sigma}\norm{\tilde x}^2.
\end{align*}
Therefore, if
$
  (\bar y,\bar z) = \arg\min \Psi(y,z),
$
then $(\bar y, \bar z) \in\idch\times\dcp$ can be computed simultaneously by
\[
   \bar y= \arg\min \psi(y), \quad
   \bar z = \textup{Prox}_{p^*/\sigma}(\tilde x/\sigma - \cA^*\bar y + c ).
\]
Note that $\psi(\cdot)$ is strongly convex and continuously differentiable on $\idch$ with
\[\nabla \psi(y) = \nabla h^*(y) - \cA\,\textup{Prox}_{\sigma p}(\tilde x - \sigma(\cA^*y - c)),\quad\forall y\in\idch.\]
Thus, $\bar y$ can be obtained via solving the following nonsmooth equation
\begin{equation}\label{eq-xi}
\nabla \psi(y) = 0, \quad y\in\idch.
\end{equation}
Let $y\in\idch$ be any given point. Since $h^*$ is a convex function with a locally Lipschitz
continuous gradient on $\idch$, the following operator is well defined:
\[\hat\partial^2 \psi(y) := \partial( \nabla h^*)(y) + \sigma \cA \partial\textup{Prox}_{\sigma p}(\tilde x - \sigma (\cA^*y - c))\cA^*, \]
where $\partial( \nabla h^*)(y)$ is the Clarke subdifferential of $\nabla h^*$ at $y$ \cite{Clarke83}, and $\partial\textup{Prox}_{\sigma p}(\tilde x - \sigma (\cA^*y - c))$ is the Clarke subdifferential of the Lipschitz continuous mapping $\textup{Prox}_{\sigma p}(\cdot)$
at $\tilde x - \sigma (\cA^*y - c)$.
Note that from \cite[Proposition 2.3.3 and Theorem 2.6.6]{Clarke83}, we know that
\[
{\partial}^2 \psi(y)\, (d) \subseteq \hat{\partial}^2 \psi(y)\, (d), \quad  \forall \, d \in \cY,
\]
where ${\partial}^2 \psi(y)$ denotes the generalized Hessian of $\psi$ at $y$.
Define
\begin{equation}\label{p2:eq-netwon-partial}
V  := H  + \sigma \cA U \cA^*
 \end{equation}
with $H \in \partial^2 h^*(y)$ and $U  \in \partial\textup{Prox}_{\sigma p}(\tilde x - \sigma (\cA^* y - c)). $
 Then, we have $V \in\hat\partial^2\psi(y)$. Since $h^*$ is a strongly convex function, we know that $H $ is symmetric positive definite on $\cY$ and thus $V $ is also symmetric positive definite on $\cY$.

{Under the mild assumption that
$\nabla h^*$ and $\textup{Prox}_{\sigma p}$ are strongly semismooth (whose definition is given next),
we can design a superlinearly convergent semismooth Newton method to solve the
nonsmooth equation \eqref{eq-xi}.
}

\begin{definition}[Semismoothness \cite{Mifflin77,QiSun93,SunS2002}]
  Let $F:\cO \subseteq \cX \rightarrow \cY$ be a locally Lipschitz continuous function on the open set $\cO$. $F$ is said to be semismooth at $x\in \cO$ if
  $F$ is directionally differentiable at $x$ and for any $V \in \partial F(x + \Delta x)$ with $\Delta x\rightarrow 0$,
        \[F(x+\Delta x) - F(x) - V\Delta x = o(\norm{\Delta x}).\] $F$ is said to be strongly semismooth at $x$ if F is semismooth at $x$ and
        \[F(x+\Delta x) - F(x) - V\Delta x = O(\norm{\Delta x}^2).\]
    $F$ is said to be a semismooth (respectively, strongly semismooth) function on $\cO$
if it is semismooth (respectively, strongly semismooth) everywhere in $\cO$.
\end{definition}
Note that it is widely known in the nonsmooth optimization/equation community that   continuous piecewise affine functions and twice continuously differentiable functions are {all} strongly semismooth everywhere.  In particular, $\textup{Prox}_{\norm{\cdot}_1}$, as a Lipschitz continuous piecewise affine function, is strongly semismooth. See \cite{facchinei2003finite} for more semismooth and strongly semismooth functions.

Now, we can design a semismooth Newton ({\sc Ssn}) method to solve (\ref{eq-xi}) as follows and could expect to get a fast superlinear or even quadratic convergence.

\bigskip

\centerline{\fbox{\parbox{\textwidth}{
{\bf Algorithm {\sc Ssn}}: {\bf A semismooth Newton algorithm for solving (\ref{eq-xi})} ({\sc Ssn}($y^0, \tilde x, \sigma$)).
\\[5pt]
Given $\mu \in (0, 1/2)$, $\bar{\eta} \in (0, 1)$, $\tau \in (0,1]$, and $\delta \in (0, 1)$. Choose $y^0\in\textup{int}(\textup{dom}\,h^*)$.  Iterate the following steps for $j=0,1,\ldots.$
\begin{description}
\item[Step 1.]  Choose $H_j\in \partial (\nabla h^*)(y^j)$ and $U_j\in \partial\textup{Prox}_{\sigma p}(\tilde x - \sigma (\cA^* y^j - c))$.    Let $V_j := H_j + \sigma \cA U_j \cA^*$.
Solve the following linear system
\begin{equation}\label{eqn-epsk}
V_j d + \nabla \psi(y^j) = 0
\end{equation}
exactly or by the conjugate gradient (CG) algorithm
to find $d^j$
such that
\[
\norm{V_j d^j  + \nabla \psi(y^j)}\le \min(\bar{\eta}, \| \nabla \psi(y^j)\|^{1+\tau}).
\]
\item[Step 2.] (Line search)  Set $\alpha_j = \delta^{m_j}$, where $m_j$ is the first nonnegative integer $m$ for which
                         \begin{eqnarray}
                         y^j + \delta^{m} d^j \in \idch\quad \textup{and}\quad
                          \psi(y^j + \delta^{m} d^j) \leq \psi(y^j) + \mu \delta^{m}
                           \langle \nabla \psi(y^j), d^j \rangle. \nn
                          \end{eqnarray}
\item[Step 3.] Set $y^{j+1} = y^j + \alpha_j \, d^j$.
\end{description}
}}}

\medskip

The convergence results for the above {\sc Ssn} algorithm are stated in the next theorem.
\begin{theorem}\label{convergence-zwy-newton}
 Assume that $\nabla h^*(\cdot)$ and $\textup{Prox}_{\sigma p}(\cdot)$ are strongly semismooth on $\idch$ and $\cX$, respectively. Let the sequence $\{y^j\}$ be generated by Algorithm {\sc Ssn}.
Then $\{y^j\}$ converges to the unique optimal solution $\bar y \in \idch$ of the  problem in (\ref{eq-xi}) and
\[
\|y^{j+1} - \bar y \| = O(\| y^{j} - \bar y \| ^{1+\tau}). \nn
\]
\end{theorem}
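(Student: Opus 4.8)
The plan is to prove the theorem in two stages: a \emph{global} stage establishing that the whole sequence $\{y^j\}$ converges to the unique minimizer $\bar y$ of $\psi$ on $\idch$, driven by the Armijo line search of Step~2; and a \emph{local} stage establishing the $(1+\tau)$-order rate once the iterates enter a neighbourhood of $\bar y$, driven by the strong semismoothness of $\nabla\psi$ together with the uniform nonsingularity of the generalized Hessian elements $V_j$.

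For the global stage I would first record two structural facts. First, since $h^*$ is strongly convex with modulus $\alpha_h$, every $H_j\in\partial^2 h^*(y^j)$ satisfies $H_j\succeq\alpha_h I$; moreover each $U_j\in\partial\textup{Prox}_{\sigma p}(\cdot)$ is symmetric positive semidefinite with $\norm{U_j}\le 1$ (the proximal mapping of a convex function is the gradient of a convex function), so $V_j=H_j+\sigma\cA U_j\cA^*\succeq\alpha_h I$, and $\norm{V_j}$ is bounded on any bounded set by the local Lipschitz modulus of $\nabla h^*$ and $\norm{\cA}^2$. Hence $\norm{V_j^{-1}}\le 1/\alpha_h$, and the inexactly computed $d^j$ (with residual $r^j:=V_j d^j+\nabla\psi(y^j)$, $\norm{r^j}\le\norm{\nabla\psi(y^j)}^{1+\tau}$) is a descent direction obeying a uniform angle condition. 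Second, $\psi$ is strongly convex and $C^1$ on $\idch$, so it has a unique minimizer $\bar y$ and bounded level sets. It then follows that the line search --- including the feasibility test $y^j+\delta^m d^j\in\idch$, which can be violated only for finitely many $m$ because $y^j\in\idch$ and $\idch$ is open --- terminates finitely, the iterates remain in the bounded level set $\{\psi\le\psi(y^0)\}$, and by a standard argument (cf.\ \cite{facchinei2003finite}) $\nabla\psi(y^j)\to 0$; hence every accumulation point is a stationary point of $\psi$, which by uniqueness is $\bar y$, and boundedness yields $y^j\to\bar y$.

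For the local stage I would use that sums, linear images, and affine precompositions of strongly semismooth maps are strongly semismooth, so $\nabla\psi(\cdot)=\nabla h^*(\cdot)-\cA\,\textup{Prox}_{\sigma p}(\tilde x-\sigma(\cA^*\cdot-c))$ is strongly semismooth on $\idch$. For $y^j$ near $\bar y$, using $\nabla\psi(\bar y)=0$, $\norm{V_j^{-1}}\le 1/\alpha_h$, the local Lipschitz continuity of $\nabla\psi$, and $\norm{r^j}\le\norm{\nabla\psi(y^j)}^{1+\tau}=O(\norm{y^j-\bar y}^{1+\tau})$,
\[
\norm{y^j+d^j-\bar y}\le\frac{1}{\alpha_h}\Big(\norm{\nabla\psi(y^j)-\nabla\psi(\bar y)-V_j(y^j-\bar y)}+\norm{r^j}\Big)=O(\norm{y^j-\bar y}^{1+\tau}),
\]
the first term being $O(\norm{y^j-\bar y}^2)$ by strong semismoothness. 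It then remains to show the unit step is eventually accepted: since $y^j\to\bar y\in\idch$ (open) we get $y^j+d^j\in\idch$ for $j$ large; and since $\langle\nabla\psi(y^j),d^j\rangle=-\langle V_j d^j,d^j\rangle+\langle r^j,d^j\rangle\le-\tfrac{\alpha_h}{2}\norm{d^j}^2$ while a second-order expansion (using the semismoothness of $\nabla\psi$) gives $\psi(y^j+d^j)-\psi(y^j)=\tfrac12\langle\nabla\psi(y^j),d^j\rangle+o(\norm{d^j}^2)$, the Armijo test $\psi(y^j+d^j)\le\psi(y^j)+\mu\langle\nabla\psi(y^j),d^j\rangle$ holds for $j$ large because $\mu<1/2$. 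Thus $y^{j+1}=y^j+d^j$ eventually, and the displayed estimate yields $\norm{y^{j+1}-\bar y}=O(\norm{y^j-\bar y}^{1+\tau})$.

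The main obstacle I anticipate is precisely this transition --- verifying that the backtracking line search accepts the full step for all $j$ sufficiently large. This requires $d^j\to 0$, the superlinear approximation estimate above, and a careful comparison of $\psi(y^j+d^j)-\psi(y^j)$ with $\mu\langle\nabla\psi(y^j),d^j\rangle$ that exploits both the uniform positive definiteness of $V_j$ and the restriction $\mu<1/2$; the feasibility constraint $y\in\idch$ must also be tracked throughout, although it becomes inactive near $\bar y$. Everything else is either structural (strong convexity of $h^*$, positive semidefiniteness of $U_j$, strong semismoothness of $\nabla\psi$) or a direct application of the standard global/local theory for semismooth Newton methods in \cite{facchinei2003finite}.
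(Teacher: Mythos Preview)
Your proposal is correct and follows essentially the same approach as the paper's own proof, which simply observes that $d^j$ is a descent direction (by \cite[Proposition~3.3]{SDPNAL}), that $V_j\in\hat\partial^2\psi(y^j)$, and then defers entirely to \cite[Theorems~3.4 and~3.5]{SDPNAL} for the global convergence and the local $(1+\tau)$-order rate. Your two-stage outline (global descent via strong convexity of $\psi$ and uniform positive definiteness of $V_j$; local rate via strong semismoothness of $\nabla\psi$ plus eventual acceptance of the unit step because $\mu<1/2$) is precisely the content of those cited results, so you have effectively reconstructed the omitted argument rather than taken a different route.
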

\begin{proof}
 Since, by \cite[Proposition 3.3]{SDPNAL}, $d^j$ is a descent direction, Algorithm \SSNCG is well-defined.  By (\ref{p2:eq-netwon-partial}), we know that for any $j\ge 0$, $V_j \in \hat{\partial}^2 \psi(y^j)$.
 Then, we can prove the {conclusion} of this theorem by following the proofs to
 \cite[Theorems 3.4 and 3.5]{SDPNAL}. We omit the details here.
 \end{proof}

We shall now discuss the implementations of stopping criteria (A), (B1) and (B2)
{for} Algorithm \SSNCG to solve the  subproblem (\ref{p2:alm-sub}) in Algorithm {\sc Ssnal}.
Note that when  \SSNCG is applied to minimize $\Psi_k(\cdot)$ to find
\[y^{k+1} = \textup{{\sc Ssn}}(y^k ,x^k,\sigma_k)\quad \textup{and}\quad
z^{k+1}=\textup{Prox}_{p^*/\sigma_k}(  x^k/\sigma_k - \cA^* y^{k+1} + c ),\] we have, by simple calculations and the strong convexity of $h^*$, that
\[ \Psi_k(y^{k+1},z^{k+1}) - \inf \Psi_k = \psi_k(y^{k+1}) - \inf \psi_k \le (1/2\alpha_h) \norm{\nabla \psi_k(y^{k+1})}^2  \]
and 
$(\nabla \psi_k(y^{k+1}), 0) \in \partial \Psi_k(y^{k+1},z^{k+1})$, where $\psi_k(y): = \inf_z\Psi_k(y,z)$ for all $y\in\cY$.
Therefore, the stopping criteria (A), (B1) and (B2) can be
{achieved} by the following implementable criteria
\begin{eqnarray}
& ({\rm A}')\quad \norm{\nabla{\psi_k(y^{k+1})}} \le \sqrt{\alpha_h/ \sigma_k}\, \varepsilon_k ,\quad \sum_{k=0}^{\infty} \varepsilon_k < \infty, \nn \\[5pt]
& ({\rm B1}')\quad\norm{\nabla{\psi_k(y^{k+1})}} \le \sqrt{\alpha_h \sigma_k}\, \delta_k \norm{\cA^*y^{k+1} + z^{k+1} - c}, \quad
\sum_{k=0}^\infty\delta_k < +\infty, \nn \\[5pt]
&({\rm B2}')\quad\norm{\nabla{\psi_k(y^{k+1})}} \le \delta_k' \norm{\cA^*y^{k+1} + z^{k+1} - c}, \quad
0 \le \delta_k' \to 0. \nn
\end{eqnarray}
That is, the stopping criteria (A), (B1) and (B2) will be satisfied as long as $\norm{\nabla \psi_k(y^{k+1})}$ is sufficiently small.

\subsection{An efficient implementation of \SSNCG for solving subproblems \eqref{p2:alm-sub}}
When Algorithm {\sc Ssnal} is applied to solve the general convex composite optimization model $({\bf P})$, the key part is to use Algorithm {\sc Ssn} to solve the subproblems \eqref{p2:alm-sub}.
In this subsection, we shall discuss an efficient implementation of \SSNCG for solving the aforementioned subproblems when the nonsmooth regularizer $p$ is chosen to be $\lambda\norm{\cdot}_1$ for some $\lambda >0$.
Clearly, in Algorithm {\sc Ssn}, the most important step is the computation of the search direction $d^j$
 from the linear system \eqref{eqn-epsk}.
 So we shall first discuss the solving of  this linear system.

Let  $(\tilde x,y)\in\Re^n\times\Re^m$ and $\sigma >0$ be given. We consider the following Newton linear system
\begin{equation}\label{eq:general_NTeq}
(H + \sigma  A  U  A^T) d = -\nabla \psi(y),
\end{equation}
where $H\in\partial (\nabla h^*)(y)$, $A$ denotes the matrix representation of $\cA$ with respect to the standard bases of $\Re^n$ and $\Re^m$, $U \in \partial{\rm Prox}_{\sigma \lambda \norm{\cdot}_1}( x)$ with $  x := \tilde x - \sigma  (A^T y -c)$. Since
$H $ is a symmetric and positive definite matrix, equation \eqref{eq:general_NTeq} can be equivalently rewritten as
\[\big(I_{m} + \sigma (L^{-1} A) U (L^{-1}A)^T\big)( L^T d) = -L^{-1} \nabla \psi(y),\]
where $L$ is a nonsingular  matrix obtained from the (sparse) Cholesky decomposition of $H$ such that $H = LL^T$. In many applications, $H$ is usually a sparse matrix. Indeed, when the function $h$ in the primal objective is taken to be the squared loss or the logistic loss functions, the 
{resulting}
matrices $H$ are in fact diagonal matrices. That is, the costs of computing
$L$ and its inverse are negligible in most situations.
Therefore, without loss of generality, we can consider a
simplified version of \eqref{eq:general_NTeq} as follows
\begin{equation}\label{eq:LassoNTeq}
(I_m + \sigma  A  U  A^T) d = -\nabla \psi(y),
\end{equation}
which is precisely the Newton system associated with the standard Lasso problem \eqref{eq-l1}.
Since $U\in\Re^{n\times n}$ is a diagonal matrix, at the first glance, the costs of computing $A U A^T$ and the  matrix-vector multiplication $A U A^T d$ for a given vector $d\in \Re^m$ are $\cO(m^2n)$ and $\cO(mn)$, respectively.
These computational costs are too expensive  when the dimensions of $A$ are large and can make the commonly
{employed}
approaches such as the Cholesky factorization and the conjugate gradient method inappropriate for solving
\eqref{eq:LassoNTeq}.
Fortunately, under the sparse optimization setting, if the sparsity of $U$ is wisely taken into the consideration, one can  substantially reduce  these unfavorable computational costs to a level such that they are  negligible or at least insignificant  compared to other costs. Next,
we shall show how this can be done by taking  full advantage of the sparsity  of $U$. This sparsity will be referred as the second order sparsity of the underlying problem.

{For $ x = \tilde x - \sigma ( A^T y - c)$,  in our computations, we can always choose 
$U = {\rm Diag}(u)$, the diagonal matrix whose
$i$th diagonal element is given by $u_i$
with
\begin{equation*}
u_i = \left\{\begin{aligned}
& 0 ,\quad \textup{if} \quad |  {x_i}|\le \sigma\lambda,
\\[5pt]
&1 , \quad \textup{otherwise},
\end{aligned}
\quad i=1,\ldots,n.
\right.
\end{equation*}
Since ${\rm Prox}_{\sigma \lambda \norm{\cdot}_1}(x) = {\rm sign}(x)\circ \max\{|x| - \sigma\lambda, 0\}$, it is not difficult to see that $U\in\partial{\rm Prox}_{\sigma \lambda \norm{\cdot}_1}(x) $.
}
Let $\cJ:=\{j \mid |{x}_j|> \sigma\lambda,\, j=1,\ldots,n \}$ and {$r=|\cJ|$, the cardinality of
$\cJ$.}
By taking the special $0$-$1$ structure of $U$ into consideration, we have {that}
\begin{equation}\label{eq:AUAT}
AUA^T = (AU)(AU)^T = A_{\cJ} A_{\cJ}^T,
\end{equation}
where $A_{\cJ}\in \Re^{m\times r}$ is the sub-matrix of $A$ with  those columns not in  $\cJ$ being removed from $A$. Then, by using \eqref{eq:AUAT}, we know that now the costs of computing $AUA^T$ and $A U A^T d$ for a given vector $d$ are reduced to $\cO(m^2 r)$ and $\cO(m r)$, respectively.  Due to the sparsity promoting property of the regularizer $p$, the number $r$ is usually much smaller than $n$. Thus, by     exploring   the aforementioned second order sparsity, we can greatly reduce the computational costs in solving the linear system \eqref{eq:LassoNTeq}
{when}
 using  the Cholesky factorization. 
 More specifically, the total computational costs of using the Cholesky factorization to solve the linear system  are reduced from $\cO(m^2 (m + n))$ to
$\cO(m^2 (m + r))$. See Figure \ref{fig:AAttoAuAt} for an illustration on the reduction.
This means that even if $n$ happens to be   extremely large (say, larger than  $10^7$), one can still solve the Newton linear system \eqref{eq:LassoNTeq} efficiently via the Cholesky factorization as long as both $m$ and $r$ are moderate (say, less than $10^4$).
\begin{figure}[htp]
\centering
	\begin{tikzpicture}[line width=1pt]
	\node[above] at (-0.3,0.5) {$m$};
	\node[above] at (1.5, 1) {$n$};
	\node[left] at (-0.5,0.3) {$ AUA^T=$};
	\fill[blue] (0,0) rectangle (0.3,1);
	\fill[red]  (1,0) rectangle (5,1);
	\fill[blue] (5.2,1) rectangle (6.2,0.7);
	\fill[red] (5.2,0) rectangle (6.2,-4);
	\draw (0,0) rectangle (5,1);
	\draw (5.2,1) rectangle (6.2,-4);
	\draw[vecArrow] (2.5,-2) to (2.5,-4);
	\node[right] at (6.3,0) { $\cO(m^2n)$};
	\end{tikzpicture}
 	\begin{tikzpicture}[line width=1pt]
 	\node[right] at (5.6,0.4) {$ A_{\cJ} A_{\cJ}^T=$};
 	\node[above] at (7.8,0.5) {$m$};
 	\node[above] at (8.1, 1) {$r$};
 	\draw  (8,0) rectangle (9,1);
 	\fill[blue] (8,0) rectangle (8.3,1);
 	\draw  (9.2,0) rectangle (10.2,1);
 	\fill[blue] (9.2,1) rectangle (10.2,0.7);
 	\node[right] at (10.2,0.4) {$=$};
 	\fill[blue] (10.8,0) rectangle (11.8,1);
 	\draw (10.8,0) rectangle (11.8,1);
 	\node[right] at (12,0.5) { $\cO(m^2r)$};
 	\end{tikzpicture}
\caption{Reducing the computational costs from $\cO(m^2 n)$ to $\cO(m^2 r)$}
\label{fig:AAttoAuAt}
\end{figure}
If, in addition, $r \ll m$, which is often the case when $m$ is large and the optimal solutions to the underlying problem are sparse, instead of factorizing an $m\times m$ matrix, we can make use of the Sherman-Morrison-Woodbury formula \cite{matrixcom1996} to get the inverse of $I_m + \sigma A U A^T$ by inverting a much smaller $r \times r  $ matrix as follows:
\[(I_m + \sigma A U A^T)^{-1}
= (I_m + \sigma A_{\cJ}A_{\cJ}^T)^{-1}
= I_m - A_\cJ(\sigma^{-1} I_{r} + A_{\cJ}^T A_{\cJ})^{-1}A_{\cJ}^T.\]
 See  Figure \ref{fig:AjtAj} for an illustration on 
{the computation of}
 $A_{\cJ}^T A_{\cJ}$. In this case, the total computational costs for solving the Newton linear  system \eqref{eq:LassoNTeq} are  reduced significantly further from $\cO(m^2 (m + r))$ to $\cO(r^2 (m + r))$.
We 
{should}
emphasize here that this dramatic  reduction on the computational costs results from the wise combination of the careful examination of the  existing second order sparsity in the Lasso-type  problems and some ``smart'' numerical linear algebra.
\begin{figure}[htb]
	\begin{center}
		\begin{tikzpicture}[line width=1pt]
		\fill[blue] (0,0) rectangle (1,0.3);
		\draw (0,0) rectangle (1,0.3);
		\node[left] at (0,0.15) {$r$};
        \node[above] at (0.4,0.3) {$m$};
		\fill[blue] (1.2,0.3) rectangle (1.5, -0.7);
		\draw (1.2,0.3) rectangle (1.5, -0.7);
		\node[right] at (1.6,-0.3) {$=$};
		\fill[blue] (2.3,-0.4) rectangle (2.6,-0.1);
		\draw (2.3,-0.4) rectangle (2.6,-0.1);
		\node[left] at (-0.2,-0.3) {$A_{\cJ}^T A_{\cJ} =$};
		\node[right] at (2.8,-0.3) {$\cO(r^2m $)};
		\end{tikzpicture}
	\end{center}
   \caption{Further reducing the computational costs to $\cO(r^2 m)$}
   \label{fig:AjtAj}
\end{figure}

 From the above arguments, we can see that as long as the number of the nonzero components of $\textup{Prox}_{\sigma \lambda \norm{\cdot}_1}(x)$ is small, 
{say}
 less than $\sqrt{n}$ and $H_j\in \partial (\nabla h^*)(y^j)$ is a sparse matrix, e.g., a diagonal matrix, we can always solve  the linear system (\ref{eqn-epsk}) at very low costs. In particular, this is true for the Lasso problems admitting  sparse solutions. Similar discussions on the reduction of {the} computational costs can also be conducted
{for}
the case when the conjugate gradient method is applied to solve the linear systems (\ref{eqn-epsk}). Note that one may argue that even if the original problem has only sparse solutions, at certain stages, one may still encounter the situation that the number of the nonzero components of $\textup{Prox}_{\sigma \lambda \norm{\cdot}_1}(x)$ is large. Our answer to this question is   simple. Firstly, this phenomenon  {rarely occurs}
 in practice since we always start with a sparse feasible point, e.g., 
{the zero vector.}
Secondly, even at certain steps this phenomenon does occur, we 
just {need to} apply a small number of conjugate gradient iterations to the linear system (\ref{eqn-epsk}) as in this case the parameter $\sigma$ is normally small and the current point is far away from any sparse optimal solution.
In summary, we have demonstrated how Algorithm {\sc Ssn} can be implemented efficiently for solving  sparse optimization problems of the form \eqref{p2:alm-sub} with $p(\cdot)$ being chosen to be $\lambda\norm{\cdot}_1$.

\section{Numerical experiments for Lasso problems}
In this section, we shall evaluate the performance {of}
our algorithm {\sc Ssnal} for solving large scale Lasso problems \eqref{eq-l1}.
We note that the relative performance of most of the existing algorithms mentioned in the introduction has recently been well
documented in the two recent papers \cite{Fountoulakis14matrix,Milzarek14}, which  appears to
suggest that for some large scale sparse reconstruction problems, mfIPM\footnote{\url{http:
		//www.maths.ed.ac.uk/ERGO/mfipmcs/}}
and FPC\_AS\footnote{\url{http://www.caam.rice.edu/~optimization/L1/FPC_AS/}} have mostly outperformed the other solvers.  Hence, in this section
we will compare our algorithm with these two popular solvers.
{Note that mfIPM is a specialized  interior-point based second-order
method  designed for the Lasso problem \eqref{eq-l1},
whereas FPC\_AS is a first-order method based on forward-backward operator splitting.}
Moreover, we also report the numerical performance of two commonly used algorithms for solving Lasso problems: {the accelerated proximal gradient (APG) algorithm as} implemented by Liu et al. in SLEP\footnote{\url{http://yelab.net/software/SLEP/}} \cite{Liu2009SLEP} and the alternating direction method of multipliers (ADMM) \cite{ADMM1,ADMM2}.
For  the purpose of
comparisons, we also test the linearized ADMM (LADMM) \cite{LADMM}.
We have implemented both ADMM and LADMM in {\sc Matlab} with the step-length set to be
{1.618.}
Although the 
{existing}
solvers can perform impressively well on some easy-to-solve sparse reconstruction problems,
{as one will see later,}
they  lack the ability 
{to efficiently solve}
difficult    problems such as the large scale regression problems when the data $\cA$ is badly conditioned.

For  the
testing purpose, the regularization parameter $\lambda$ in {the} Lasso problem (\ref{eq-l1}) is chosen as \[\lambda = \lambda_{c}\norm{\cA^*b}_{\infty},\] where $0<\lambda_c<1$.
In our numerical experiments, we measure the accuracy of an approximate optimal solution $\tilde x$ for (\ref{eq-l1}) by using the following relative KKT residual:
\[\eta = \frac{\norm{\tilde x - \textup{prox}_{\lambda\norm{\cdot}_1}(\tilde x - \cA^*(\cA \tilde x - b))}}{1 + \norm{\tilde x} + \norm{\cA \tilde x -b}}.\]
For a given tolerance $\epsilon >0$, we will stop the tested algorithms when $\eta < \epsilon$. For all the tests in this section, we set $\epsilon = 10^{-6}$. The algorithms will also be stopped when they
reach the maximum number of iterations (1000 iterations for our algorithm and mfIPM, and 20000 iterations for FPC\_AS, APG, ADMM and LADMM) or the maximum computation time {of 7 hours}. All the parameters for mfIPM, FPC\_AS and APG are set to the default values. All our computational results  are obtained by running {\sc Matlab} (version 8.4) on a windows workstation (16-core, Intel Xeon E5-2650 @ 2.60GHz, 64 G RAM).

\subsection{Numerical results for large scale regression problems}

In this subsection, we test all the algorithms with the test instances $(\cA, b)$  obtained from large scale regression problems in the LIBSVM datasets \cite{Chang2011a}. These data sets
are collected from 10-K Corpus \cite{Kogan2009predicting} and UCI data repository \cite{Lichman2013UCI}. As suggested in
\cite{Huang2010predicting}, for the data sets {\bf pyrim, triazines, abalone, bodyfat, housing, mpg, space\_ga}, we expand their original features by using polynomial basis functions over those features. For example, the last digits in {\bf pyrim5 }
indicates that an order 5 polynomial is used to generate the basis functions. This naming convention is also used in the rest of the expanded data sets. These test instances, shown in Table \ref{table:UCIsta}, can be quite difficult in terms of the problem dimensions and the largest eigenvalues of $\cA\cA^*$, which is denoted as $\lambda_{\max}(\cA\cA^*)$.

\begin{scriptsize}
	\begin{longtable}{| c |c|c|}
		\caption{Statistics of the UCI test instances.}\label{table:UCIsta}
		\\
		\hline
		\mc{1}{|c|}{ }  &\mc{1}{c|}{ } &\mc{1}{c|}{ }\\[-5pt]
		\mc{1}{|c|}{probname}  &\mc{1}{c|}{$m;n$} &\mc{1}{c|}{$\lambda_{\max}(\cA\cA^*)$} \\[2pt]
		\hline
		\endhead
     E2006.train
     & 16087$;$150360& 1.91e+05\\[2pt]
     \hline
     log1p.E2006.train
     & 16087$;$4272227& 5.86e+07\\[2pt]
     \hline
     E2006.test
     & 3308$;$150358& 4.79e+04\\[2pt]
     \hline
     log1p.E2006.test
     & 3308$;$4272226& 1.46e+07\\[2pt]
     \hline
     pyrim5
     & 74$;$201376& 1.22e+06\\[2pt]
     \hline
     triazines4
     & 186$;$635376& 2.07e+07\\[2pt]
     \hline
     abalone7
     & 4177$;$6435& 5.21e+05\\[2pt]
     \hline
     bodyfat7
     & 252$;$116280& 5.29e+04\\[2pt]
     \hline
     housing7
     & 506$;$77520& 3.28e+05\\[2pt]
     \hline
     mpg7
     & 392$;$3432& 1.28e+04\\[2pt]
     \hline
     space\_ga9
     & 3107$;$5005& 4.01e+03\\[2pt]
     \hline
	\end{longtable}
\end{scriptsize}

Table \ref{table:UCI} reports the detailed numerical results for {\sc Ssnal}, mfIPM, FPC\_AS, APG, LADMM and ADMM in solving  large scale regression problems. In the table, $m$ denotes the {number of samples}, 
$n$ denotes the number of features and
``nnz'' denotes the number of nonzeros in the solution $x$ obtained by {\sc Ssnal}
using the following estimation
\[{\rm nnz}:= \min \Big\{k \mid \sum_{i=1}^k |\hat x_i| \ge 0.999\norm{x}_1\Big\},\]
where $\hat x$ is obtained by sorting $x$  {such that $|\hat{x}_1| \geq \ldots \geq |\hat{x}_n|$.}
One can observe from Table \ref{table:UCI} that all the tested first order algorithms except ADMM, i.e., FPC\_AS, APG and LADMM
fail to solve most of the test instances to the required accuracy after 20000 iterations or 7 hours.
In particular, FPC\_AS fails to produce a reasonably accurate solution for all the test instances.
In fact, for 3 test instances, it breaks down due to some internal errors.
This poor performance indicates that these first order methods cannot obtain reasonably accurate solutions
{when dealing}
with difficult large scale problems.
{While ADMM can solve most of the test instances, it needs much more time than {\sc Ssnal}.} For example,
{for the instance}
{\bf housing7}
with $\lambda_c = 10^{-3}$, we can see that {\sc Ssnal} is at least 330 times faster than ADMM. In addition,
{\sc Ssnal} can solve 
{the instance}
 {\bf  pyrim5} in 9 seconds while ADMM 
{reaches the maximum of
20000 iterations and consumes about 2 hours but only produces a rather} inaccurate solution.

On the other hand, one can observe that the two second order information based methods {\sc Ssnal} and
mfIPM perform quite {robustly} despite the huge dimensions and the possibly badly conditioned data sets. More specifically, {\sc Ssnal} is able to solve {the instance}
{\bf log1p.E2006.train}
with approximately 4.3 million features in 20 seconds ($\lambda_c = 10^{-3}$).
Among these two algorithms, clearly, \SSNAL is far more efficient than
{the specialized interior-point method} mfIPM for all the test instances, especially for
 large scale problems where the factor can be up to 300 times {faster}.
While \SSNAL can solve all the instances to the desired accuracy, as the 
{problems get progressively}
 more difficult ($\lambda_c$ decreases from $10^{-3}$ to $10^{-4}$), mfIPM fails on more test instances (2 out of 11 vs. 4 out of 11 instances). We also note that mfIPM can only reach a solution with the accuracy of $10^{-1}$ when it fails to compute the corresponding Newton directions. These facts indicate that the nonsmooth approach {employed by
\SSNAL
is far more superior compared to}
the interior point method in exploiting the  sparsity
{in the generalized Hessian.}
The superior numerical performance of \SSNAL indicates that {it} is a robust, high-performance solver for high-dimensional Lasso problems.

{As pointed out by one referee, the polynomial expansion in our data processing step may affect the scaling of the problems. 
Since first order solvers are not affine invariant, this scaling may affect their performance. Hence,  we normalize the matrix $A$ (the matrix representation of $\cA$) to have columns with at most unit norm and 
correspondingly change the variables.  This scaling step also changes the regularization parameter $\lambda$ accordingly to a nonuniform weight vector. Since it is not easy to call FPC\_AS when $\lambda$ is not a scalar, 
based on the recommendation of the referee, 
we use another popular active-set based solver  PSSas\footnote{https://www.cs.ubc.ca/~schmidtm/Software/thesis.html}
\cite{Schmidt2010thesis} to replace FPC\_AS for the testing. All the parameters for PSSas are set to the default values. In our tests, PSSas will be stopped when it reaches the maximum number of 20000 iterations or the maximum computation time of 7 hours. We note that
by default, PSSas will terminate when its progress is smaller than the threshold $10^{-9}$.

The detailed numerical results for {\sc Ssnal}, mfIPM, PSSas, APG, LADMM and ADMM  with the normalization step in solving the large scale regression problems are listed in Table \ref{table:UCI_scale}.
From Table \ref{table:UCI_scale}, one can easily observe that the simple normalization technique does not change the conclusions based on Table \ref{table:UCI}. The performance of \SSNAL is generally invariant with respect to the scaling and \SSNAL is still much faster and more robust than  other solvers.
Meanwhile, after the normalization, mfIPM now can solve 3 more instances to the required accuracy. On the other hand, APG and the ADMM type of solvers (i.e., LADMM and ADMM) perform worse than the un-scaled case. Besides, PSSas can only solve 5 out of 22 instances to the required accuracy.
In fact, PSSas fails on all the test instances when $\lambda_c = 10^{-4}$. For the instance {\bf triazines4}, it consumes about 6 hours but only generates a poor solution with $\eta = 2.3\times 10^{-2}$. (Actually, we also run PSSas on these test instances without scaling and obtain similar performance. Detailed results are omitted to conserve space.)
Therefore, we can safely conclude that the simple normalization technique employed here may not be suitable for general first order methods.}

\begin{scriptsize}
	\begin{longtable}{| @{}l@{} |@{}c@{}|@{}c@{}| c| r|}
		\caption{The performance of {\sc Ssnal}, mfIPM, FPC\_AS, APG, LADMM and ADMM on 11 selected regression problems
			(accuracy $\epsilon= 10^{-6}$). $m$ is the sample size and $n$ is the dimension of features. In the table, ``a'' = {\sc Ssnal}, ``b'' =  mfIPM, ``c'' = FPC\_AS, ``d'' =  APG,
			``e'' = LADMM, and ``f'' = ADMM, respectively. ``nnz'' denotes the number of nonzeros in the solution obtained by {\sc Ssnal}. ``{\bf Error}'' indicates the algorithm breaks down due to some internal errors. The computation time is in the format of ``hours:minutes:seconds''. ``00'' in the time column means less than 0.5 seconds.}\label{table:UCI}
		\\
		\hline
		\mc{1}{|c|}{} &\mc{1}{c|}{} &\mc{1}{c|}{} &\mc{1}{c|}{}&\mc{1}{c|}{}\\[-5pt]
		\mc{1}{|c|}{} & \mc{1}{c|}{$\lambda_c$} &\mc{1}{c|}{nnz} &\mc{1}{c|}{$\eta$} &\mc{1}{c|}{time} \\[2pt] \hline
		\mc{1}{|c|}{probname}  &\mc{1}{c|}{} &\mc{1}{c|}{}
		&\mc{1}{c|}{a $|$ b $|$ c $|$ d $|$ e $|$ f} &\mc{1}{c|}{a $|$ b $|$ c $|$ d $|$ e $|$ f} \\[2pt]
		\mc{1}{|c|}{$m;n$} &\mc{1}{c|}{ } &\mc{1}{c|}{}
		&\mc{1}{c|}{ } &\mc{1}{c|}{ } \\ \hline
		\endhead

E2006.train
& $10^{-3}$ &  1 	 &   1.3-7 $|$    3.9-7 $|$    {\red{9.0-4}} $|$    1.1-8 $|$    5.1-14 $|$    9.1-7 	 &01 $|$ 11 $|$   1:34:40 $|$ 01 $|$ 03 $|$ 11:00\\[2pt]
16087$;$150360& $10^{-4}$ &  1 	 &   3.7-7 $|$    1.6-9 $|$    {\red{9.7-4}} $|$    1.5-7 $|$    2.6-14 $|$    7.3-7 	 &01 $|$ 14 $|$   1:37:52 $|$ 01 $|$ 04 $|$ 11:23\\[2pt]
\hline
log1p.E2006.train
& $10^{-3}$ &  5 	 &   5.6-7 $|$    4.7-8 $|$    {\red{4.7-1}} $|$    {\red{ 1.1-5}} $|$    {\red{6.4-4}} $|$    9.9-7 	 &20 $|$ 41:01 $|$   7:00:20 $|$   2:00:33 $|$   2:22:21 $|$ 39:36\\[2pt]
16087$;$4272227& $10^{-4}$ & 599 	 &   4.4-7 $|$    {\red{7.9-1}} $|$    {\red{1.6-1}} $|$    {\red{1.5-4}} $|$    {\red{6.3-3}} $|$    9.8-7 	 &55 $|$   3:20:44 $|$   7:00:00 $|$   2:04:54 $|$   2:24:23 $|$ 36:09\\[2pt]
\hline
E2006.test
& $10^{-3}$ &  1 	 &   1.6-9 $|$    2.9-7 $|$    {\red{3.7-4}} $|$    5.5-8 $|$    4.0-14 $|$    6.7-7 	 &00 $|$ 05 $|$ 33:30 $|$ 00 $|$ 01 $|$ 26\\[2pt]
3308$;$150358& $10^{-4}$ &  1 	 &   2.1-10 $|$    2.9-7 $|$    {\red{4.3-4}} $|$    3.7-7 $|$    2.9-10 $|$    6.3-7 	 &00 $|$ 05 $|$ 32:36 $|$ 00 $|$ 01 $|$ 27\\[2pt]
\hline
log1p.E2006.test
& $10^{-3}$ &  8 	 &   9.2-7 $|$    1.5-8 $|$    {\red{9.8-1}} $|$    {\red{1.1-4}} $|$    9.9-7 $|$    9.9-7 	 &17 $|$ 37:24 $|$   7:00:01 $|$   1:25:11 $|$ 22:52 $|$ 5:58\\[2pt]
3308$;$4272226& $10^{-4}$ & 1081 	 &   2.2-7 $|$    {\red{8.7-1}} $|$    {\red{8.7-1}} $|$    {\red{3.9-4}} $|$    9.9-7 $|$    9.9-7 	 &34 $|$   7:00:24 $|$   7:00:01 $|$   1:23:07 $|$   1:00:10 $|$ 4:00\\[2pt]
\hline
pyrim5
& $10^{-3}$ & 72 	 &   9.9-7 $|$    3.2-7 $|$    {\red{8.8-1}} $|$    {\red{8.9-4}} $|$    {\red{4.0-4}} $|$    {\red{ 3.2-5}} 	 &05 $|$ 21:27 $|$   2:01:23 $|$ 8:48 $|$ 9:30 $|$ 23:17\\[2pt]
74$;$201376& $10^{-4}$ & 78 	 &   7.1-7 $|$    6.6-8 $|$    {\red{9.8-1}} $|$    {\red{3.4-3}} $|$    {\red{3.7-3}} $|$    {\red{1.6-3}} 	 &09 $|$ 49:49 $|$   1:09:16 $|$ 9:28 $|$ 9:55 $|$ 24:30\\[2pt]
\hline
triazines4
& $10^{-3}$ & 519 	 &   8.4-7 $|$    {\red{8.8-1}} $|$    {\red{9.1-1}} $|$    {\red{1.9-3}} $|$    {\red{2.9-3}} $|$    {\red{3.2-4}} 	 &36 $|$ 53:30 $|$   7:30:17 $|$ 54:44 $|$   1:02:26 $|$   2:17:11\\[2pt]
186$;$635376& $10^{-4}$ & 260 	 &   9.9-7 $|$    {\red{8.9-1}} $|$    {\red{9.8-1}} $|$    {\red{1.1-2}} $|$    {\red{1.6-2}} $|$    {\red{1.1-3}} 	 &1:44 $|$ 53:33 $|$   7:00:00 $|$   1:03:56 $|$   1:01:26 $|$   2:00:18\\[2pt]
\hline
abalone7
& $10^{-3}$ & 24 	 &   5.7-7 $|$    3.5-7 $|$ {\bf Error} $|$    {\red{ 5.3-5}} $|$    {\red{ 4.9-6}} $|$    9.9-7 	 &02 $|$ 49 $|$ {\bf Error} $|$ 10:38 $|$ 18:04 $|$ 9:52\\[2pt]
4177$;$6435& $10^{-4}$ & 59 	 &   3.7-7 $|$    4.7-7 $|$ {\bf Error} $|$    {\red{3.9-3}} $|$    {\red{ 3.3-5}} $|$    9.9-7 	 &03 $|$ 3:19 $|$ {\bf Error} $|$ 10:43 $|$ 13:58 $|$ 9:36\\[2pt]
\hline
bodyfat7
& $10^{-3}$ &  2 	 &   3.8-8 $|$    4.0-9 $|$    {\red{3.6-1}} $|$    8.8-7 $|$    9.0-7 $|$    9.9-7 	 &02 $|$ 1:29 $|$   1:12:02 $|$ 4:00 $|$ 3:08 $|$ 1:49\\[2pt]
252$;$116280& $10^{-4}$ &  3 	 &   4.6-8 $|$    3.4-7 $|$    {\red{1.9-1}} $|$    {\red{ 3.3-5}} $|$    9.8-7 $|$    9.9-7 	 &03 $|$ 2:41 $|$   1:13:08 $|$ 12:16 $|$ 4:19 $|$ 4:05\\[2pt]
\hline
housing7
& $10^{-3}$ & 158 	 &   2.3-7 $|$    {\red{8.1-1}} $|$    {\red{8.4-1}} $|$    {\red{2.6-4}} $|$    {\red{1.7-4}} $|$    9.9-7 	 &04 $|$   5:13:19 $|$   1:41:01 $|$ 16:52 $|$ 20:18 $|$ 22:12\\[2pt]
506$;$77520& $10^{-4}$ & 281 	 &   7.7-7 $|$    {\red{8.5-1}} $|$    {\red{4.2-1}} $|$    {\red{5.3-3}} $|$    {\red{5.9-4}} $|$    {\red{ 6.6-6}} 	 &08 $|$   5:00:09 $|$   1:39:36 $|$ 17:04 $|$ 20:53 $|$ 42:36\\[2pt]
\hline
mpg7
& $10^{-3}$ & 47 	 &   2.0-8 $|$    6.5-7 $|$ {\bf Error} $|$    {\red{ 1.9-6}} $|$    9.9-7 $|$    9.8-7 	 &00 $|$ 04 $|$ {\bf Error} $|$ 38 $|$ 14 $|$ 07\\[2pt]
392$;$3432& $10^{-4}$ & 128 	 &   4.1-7 $|$    8.5-7 $|$    {\red{4.5-1}} $|$    {\red{1.2-4}} $|$    {\red{ 1.3-6}} $|$    9.9-7 	 &00 $|$ 13 $|$ 4:09 $|$ 39 $|$ 1:01 $|$ 11\\[2pt]
\hline
space\_ga9
& $10^{-3}$ & 14 	 &   4.8-7 $|$    2.6-7 $|$    {\red{1.5-2}} $|$    2.5-7 $|$    9.9-7 $|$    9.9-7 	 &01 $|$ 16 $|$ 30:55 $|$ 1:46 $|$ 42 $|$ 37\\[2pt]
3107$;$5005& $10^{-4}$ & 38 	 &   3.7-7 $|$    3.0-7 $|$    {\red{4.0-2}} $|$    {\red{ 1.8-5}} $|$    9.9-7 $|$    9.9-7 	 &01 $|$ 41 $|$ 30:26 $|$ 6:46 $|$ 2:20 $|$ 56\\[2pt]
\hline

	\end{longtable}
\end{scriptsize}

		\begin{scriptsize}
	\begin{longtable}{| @{}l@{} |@{}c@{}|@{}c@{}| c|r|}
		\caption{The performance of {\sc Ssnal}, mfIPM, PSSas, APG, LADMM and ADMM on 11 selected regression problems with {\bf scaling}
			(accuracy $\epsilon= 10^{-6}$). In the table, ``a'' = {\sc Ssnal}, ``b'' =  mfIPM, ``c1'' = PSSas, ``d'' =  APG,
			``e'' = LADMM, and ``f'' = ADMM, respectively. ``nnz'' denotes the number of nonzeros in the solution obtained by {\sc Ssnal}.  The computation time is in the format of ``hours:minutes:seconds''. ``00'' in the time column means less than 0.5 seconds.}\label{table:UCI_scale}
		\\
		\hline
		\mc{1}{|c|}{} &\mc{1}{c|}{} &\mc{1}{c|}{} &\mc{1}{c|}{}&\mc{1}{c|}{}\\[-5pt]
		\mc{1}{|c|}{} & \mc{1}{c|}{$\lambda_c$} &\mc{1}{c|}{nnz} &\mc{1}{c|}{$\eta$} &\mc{1}{c|}{time} \\[2pt] \hline
		\mc{1}{|c|}{probname}  &\mc{1}{c|}{} &\mc{1}{c|}{}
		&\mc{1}{c|}{a $|$ b $|$ c1 $|$ d $|$ e $|$ f} &\mc{1}{c|}{a $|$ b $|$ c1 $|$ d $|$ e $|$ f} \\[2pt]
		\mc{1}{|c|}{$m;n$} &\mc{1}{c|}{ } &\mc{1}{c|}{}
		&\mc{1}{c|}{ } &\mc{1}{c|}{ } \\ \hline
		\endhead
		
		E2006.train 
		& $10^{-3}$ &  1 	 &   1.6-7 $|$    4.1-7 $|$    4.6-12 $|$    9.1-7 $|$    8.7-7 $|$    9.7-7 	 &01 $|$ 14 $|$ 01 $|$ 02 $|$ 05 $|$ 10:21\\[2pt] 
		16087$;$150360& $10^{-4}$ &  1 	 &   4.2-9 $|$    1.4-8 $|$    {\red{4.2-4}} $|$    3.6-7 $|$    5.9-7 $|$    9.6-7 	 &01 $|$ 16 $|$ 02 $|$ 03 $|$ 05 $|$ 13:52\\[2pt] 
		\hline 
		log1p.E2006.train 
		& $10^{-3}$ &  5 	 &   2.6-7 $|$    4.9-7 $|$    {\red{ 1.7-6}} $|$    {\red{1.7-4}} $|$    {\red{1.7-4}} $|$    {\red{ 2.8-5}} 	 &35 $|$ 59:55 $|$   1:29:14 $|$   2:17:57 $|$   3:05:04 $|$   7:00:01\\[2pt] 
		16087$;$4272227& $10^{-4}$ & 599 	 &   5.0-7 $|$    3.6-7 $|$    {\red{ 8.8-6}} $|$    {\red{1.1-2}} $|$    {\red{3.2-3}} $|$    {\red{1.0-4}} 	 &2:04 $|$   2:18:28 $|$   5:19:39 $|$   2:34:51 $|$   3:10:05 $|$   7:00:01\\[2pt] 
		\hline 
		E2006.test 
		& $10^{-3}$ &  1 	 &   1.6-7 $|$    1.3-7 $|$    6.5-10 $|$    3.9-7 $|$    2.4-7 $|$    9.9-7 	 &01 $|$ 08 $|$ 00 $|$ 01 $|$ 01 $|$ 28\\[2pt] 
		3308$;$150358& $10^{-4}$ &  1 	 &   3.2-9 $|$    2.5-7 $|$    {\red{8.9-4}} $|$    8.9-7 $|$    6.4-7 $|$    9.5-7 	 &01 $|$ 07 $|$ 01 $|$ 01 $|$ 02 $|$ 35\\[2pt] 
		\hline 
		log1p.E2006.test 
		& $10^{-3}$ &  8 	 &   1.4-7 $|$    9.2-8 $|$    {\red{ 1.5-6}} $|$    {\red{1.6-2}} $|$    {\red{1.9-4}} $|$    9.9-7 	 &27 $|$ 30:45 $|$   1:13:58 $|$   1:29:25 $|$   2:08:16 $|$   2:30:17\\[2pt] 
		3308$;$4272226& $10^{-4}$ & 1081 	 &   7.2-7 $|$    8.5-7 $|$    {\red{ 9.4-6}} $|$    {\red{3.7-3}} $|$    {\red{1.3-3}} $|$    {\red{2.9-4}} 	 &1:40 $|$   1:24:36 $|$   5:45:18 $|$   1:30:58 $|$   2:02:07 $|$   3:06:48\\[2pt] 
		\hline 
		pyrim5 
		& $10^{-3}$ & 70 	 &   2.5-7 $|$    4.2-7 $|$    {\red{7.2-3}} $|$    {\red{3.6-3}} $|$    {\red{1.0-3}} $|$    {\red{ 4.1-5}} 	 &05 $|$ 9:03 $|$ 16:42 $|$ 8:25 $|$ 10:12 $|$ 20:48\\[2pt] 
		74$;$201376& $10^{-4}$ & 78 	 &   4.6-7 $|$    7.7-7 $|$    {\red{1.3-2}} $|$    {\red{8.2-3}} $|$    {\red{3.7-3}} $|$    {\red{2.4-3}} 	 &06 $|$ 47:20 $|$ 34:03 $|$ 9:06 $|$ 10:48 $|$ 17:37\\[2pt] 
		\hline 
		triazines4 
		& $10^{-3}$ & 566 	 &   8.5-7 $|$    {\red{7.7-1}} $|$    {\red{2.0-3}} $|$    {\red{1.8-3}} $|$    {\red{1.3-3}} $|$    {\red{1.3-4}} 	 &29 $|$ 49:27 $|$   1:35:41 $|$ 55:31 $|$   1:06:28 $|$   2:28:23\\[2pt] 
		186$;$635376& $10^{-4}$ & 261 	 &   9.8-7 $|$    {\red{9.4-1}} $|$    {\red{2.3-2}} $|$    {\red{1.1-2}} $|$    {\red{2.6-2}} $|$    {\red{2.1-2}} 	 &1:14 $|$ 48:19 $|$   5:11:45 $|$   1:03:11 $|$   1:07:45 $|$   2:07:36\\[2pt] 
		\hline 
		abalone7 
		& $10^{-3}$ & 24 	 &   8.4-7 $|$    1.6-7 $|$    1.5-7 $|$    {\red{1.3-3}} $|$    {\red{1.5-4}} $|$    {\red{ 1.8-6}} 	 &02 $|$ 2:03 $|$ 1:59 $|$ 10:05 $|$ 11:54 $|$ 37:58\\[2pt] 
		4177$;$6435& $10^{-4}$ & 59 	 &   3.7-7 $|$    9.2-7 $|$    {\red{1.8-1}} $|$    {\red{7.3-2}} $|$    {\red{5.8-2}} $|$    9.9-7 	 &04 $|$ 9:47 $|$ 12:26 $|$ 10:30 $|$ 11:49 $|$ 14:19\\[2pt] 
		\hline 
		bodyfat7 
		& $10^{-3}$ &  2 	 &   1.2-8 $|$    5.2-7 $|$    {\red{ 2.1-5}} $|$    {\red{1.4-2}} $|$    {\red{8.7-2}} $|$    9.9-7 	 &02 $|$ 1:41 $|$ 3:28 $|$ 12:49 $|$ 15:27 $|$ 10:24\\[2pt] 
		252$;$116280& $10^{-4}$ &  3 	 &   6.4-8 $|$    7.8-7 $|$    {\red{2.4-1}} $|$    {\red{2.7-2}} $|$    {\red{9.0-2}} $|$    9.9-7 	 &03 $|$ 2:18 $|$ 6:14 $|$ 13:12 $|$ 15:13 $|$ 18:40\\[2pt] 
		\hline 
		housing7 
		& $10^{-3}$ & 158 	 &   8.8-7 $|$    6.6-7 $|$    9.9-7 $|$    {\red{4.1-4}} $|$    {\red{ 5.5-6}} $|$    9.9-7 	 &03 $|$ 6:26 $|$ 9:20 $|$ 17:00 $|$ 25:56 $|$ 20:06\\[2pt] 
		506$;$77520& $10^{-4}$ & 281 	 &   8.0-7 $|$    {\red{7.0-1}} $|$    {\red{ 1.4-5}} $|$    {\red{1.1-2}} $|$    {\red{1.9-3}} $|$    {\red{ 1.3-5}} 	 &05 $|$   4:33:23 $|$ 30:27 $|$ 17:34 $|$ 19:46 $|$ 49:20\\[2pt] 
		\hline 
		mpg7 
		& $10^{-3}$ & 47 	 &   5.2-7 $|$    3.5-7 $|$    7.7-7 $|$    {\red{2.6-4}} $|$    9.9-7 $|$    9.9-7 	 &00 $|$ 05 $|$ 11 $|$ 40 $|$ 46 $|$ 11\\[2pt] 
		392$;$3432& $10^{-4}$ & 128 	 &   6.1-7 $|$    7.5-7 $|$    {\red{ 3.3-5}} $|$    {\red{1.1-3}} $|$    {\red{2.3-4}} $|$    9.9-7 	 &00 $|$ 17 $|$ 56 $|$ 41 $|$ 50 $|$ 10\\[2pt] 
		\hline 
		space\_ga9 
		& $10^{-3}$ & 14 	 &   4.3-7 $|$    2.3-7 $|$    {\red{ 2.2-5}} $|$    {\red{ 9.3-5}} $|$    9.9-7 $|$    9.9-7 	 &01 $|$ 34 $|$ 03 $|$ 5:59 $|$ 4:18 $|$ 3:21\\[2pt] 
		3107$;$5005& $10^{-4}$ & 38 	 &   1.6-7 $|$    5.4-7 $|$    {\red{ 2.4-5}} $|$    {\red{2.3-3}} $|$    {\red{3.6-3}} $|$    9.9-7 	 &01 $|$ 49 $|$ 1:07 $|$ 6:00 $|$ 6:48 $|$ 3:02\\[2pt] 
		\hline 
		
	\end{longtable}
\end{scriptsize}

\subsection{Numerical results for Sparco collection}

In this subsection, the test instances $(\cA, b)$ are taken from $8$ real valued sparse reconstruction problems in the Sparco collection \cite{Berg2009testing}.
{For  testing} purpose,
we introduce a 60dB noise to $b$ (as in \cite{Fountoulakis14matrix})
{by}
using the {\sc Matlab} command:
{\tt b = awgn(b,60,'measured').}
For these test instances, the matrix representations of the linear maps $\cA$ are not available. Hence, ADMM will not be tested in this subsection,
{as it will be extremely expensive, if not impossible, to compute and factorize
$\cI+\sigma\cA\cA^*$.}

In Table \ref{table:sparco_details}, we report the detailed computational results obtained by {\sc Ssnal}, mfIPM, FPC\_AS, APG and LADMM in solving two large scale instances {\bf srcsep1} and {\bf srcsep2} in the Sparco collection. Here, we test five choices of $\lambda_c$, i.e., $\lambda_c = 10^{-0.8}, 10^{-1}, 10^{-1.2}, 10^{-1.5}, 10^{-2}$. As one can observe,
{as $\lambda_c$ decreases, the number of nonzeros (nnz) in the computed solution
increases.}
In the table, we list some statistics of the test instances, including the problem dimensions ($m,n$) and the largest eigenvalue of $\cA\cA^*$
($\lambda_{\max}(\cA\cA^*)$). For all the tested algorithms, we present the iteration 
{counts,}
the relative KKT residuals as well as the computation times (in the format {of} hours:minutes:seconds). One can observe from Table \ref{table:sparco_details} that all the algorithms perform very well for these easy-to-solve test instances. {As such,  \SSNAL does not have a clear advantage
as shown in Table \ref{table:UCI}. 
Moreover, since the matrix representations of the linear maps $\cA$ and $\cA^*$ involved are not stored explicitly (i.e., these linear maps can only be regarded as black-box functions), the second order sparsity can hardly be fully exploited. 
Nevertheless, our algorithm \SSNAL is generally faster than mfIPM, APG, LADMM while comparable with the fastest algorithm FPC\_AS. }

\begin{scriptsize}
	\begin{longtable}{| l| r|c| r|}
		\caption{The performance of {\sc Ssnal}, mfIPM, FPC\_AS, APG and LADMM on srcsep1 and srcsep2 (accuracy $\epsilon= 10^{-6}$, noise $60\textup{dB}$). $m$ is the sample size and $n$ is the dimension of features. In the table, ``a'' = {\sc Ssnal}, ``b''= mfIPM, ``c''= FPC\_AS, ``d'' = APG and
			``e''= LADMM, respectively. ``nnz'' denotes the number of nonzeros in the solution obtained by {\sc Ssnal}. The computation time is in the format of ``hours:minutes:seconds''.}\label{table:sparco_details}
		\\
		\hline
		\mc{1}{|c|}{} &\mc{1}{c|}{} &\mc{1}{c|}{} &\mc{1}{c|}{}\\[-5pt]
		\mc{1}{|c|}{} &\mc{1}{c|}{iteration} &\mc{1}{c|}{$\eta$}  &\mc{1}{c|}{time} \\[2pt] \hline
		\mc{1}{|c|}{$\lambda_c; {\rm nnz}$} &\mc{1}{c|}{a $|$ b $|$ c $|$ d $|$ e} &\mc{1}{c|}{a $|$ b $|$ c $|$ d $|$ e} &\mc{1}{c|}{a $|$ b $|$ c $|$ d $|$ e} \\ \hline
		\endhead
		\mc{4}{|c|}{} \\[-5pt]
				\mc{4}{|c|}{srcsep1,   $m=29166$, $n = 57344$, ${\lambda_{\max}(\cA\cA^*)} = 3.56$} \\[2pt] \hline
		$0.16$ $;$ 380& 14 $|$  28 $|$  42 $|$ 401 $|$  89	 &   6.4-7$|$   9.0-7 $|$    2.7-8 $|$    5.3-7 $|$    9.5-7 	 &07 $|$ 09 $|$ 05 $|$ 15 $|$ 07\\[2pt]
		\hline
		$0.10$ $;$ 726& 16 $|$  38 $|$  42 $|$ 574 $|$ 161	 &   4.7-7$|$   5.9-7 $|$    2.1-8 $|$    2.8-7 $|$    9.6-7 	 &11 $|$ 15 $|$ 05 $|$ 22 $|$ 13\\[2pt]
		\hline
		$0.06$ $;$ 1402& 19 $|$  41 $|$  63 $|$ 801 $|$ 393	 &   1.5-7$|$   1.5-7 $|$    5.4-8 $|$    6.3-7 $|$    9.9-7 	 &18 $|$ 18 $|$ 10 $|$ 30 $|$ 32\\[2pt]
		\hline
		$0.03$ $;$ 2899& 19 $|$  56 $|$ 110 $|$ 901 $|$ 337	 &   2.7-7$|$   9.4-7 $|$    7.3-8 $|$    9.3-7 $|$    9.9-7 	 &28 $|$ 53 $|$ 16 $|$ 33 $|$ 28\\[2pt]
		\hline
		$0.01$ $;$ 6538& 17 $|$  88 $|$ 223 $|$ 1401 $|$ 542	 &   7.1-7$|$   5.7-7 $|$    1.1-7 $|$    9.9-7 $|$    9.9-7 	 &1:21 $|$ 2:15 $|$ 34 $|$ 53 $|$ 45\\[2pt]
		\hline
				\mc{4}{|c|}{} \\[-5pt]
				\mc{4}{|c|}{srcsep2,   $m=29166$, $n = 86016$, ${\lambda_{\max}(\cA\cA^*)} = 4.95$} \\[2pt]
		\hline
		$0.16$ $;$ 423& 15 $|$  29 $|$  42 $|$ 501 $|$ 127	 &   3.2-7$|$   2.1-7 $|$    1.9-8 $|$    4.9-7 $|$    9.4-7 	 &14 $|$ 13 $|$ 07 $|$ 25 $|$ 15\\[2pt]
		\hline
		$0.10$ $;$ 805& 16 $|$  37 $|$  84 $|$ 601 $|$ 212	 &   8.8-7$|$   9.2-7 $|$    2.6-8 $|$    9.6-7 $|$    9.7-7 	 &21 $|$ 19 $|$ 16 $|$ 30 $|$ 26\\[2pt]
		\hline
		$0.06$ $;$ 1549& 19 $|$  40 $|$  84 $|$ 901 $|$ 419	 &   1.4-7$|$   3.1-7 $|$    5.4-8 $|$    4.7-7 $|$    9.9-7 	 &32 $|$ 28 $|$ 18 $|$ 44 $|$ 50\\[2pt]
		\hline
		$0.03$ $;$ 3254& 20 $|$  69 $|$ 128 $|$ 901 $|$ 488	 &   1.3-7$|$   6.6-7 $|$    8.9-7 $|$    9.4-7 $|$    9.9-7 	 &1:06 $|$ 1:33 $|$ 26 $|$ 44 $|$ 59\\[2pt]
		\hline
		$0.01$ $;$ 7400& 21 $|$  94 $|$ 259 $|$ 2201 $|$ 837	 &   8.8-7$|$   4.0-7 $|$    9.9-7 $|$    8.8-7 $|$    9.3-7 	 &1:42 $|$ 5:33 $|$ 59 $|$ 2:05 $|$ 1:43\\[2pt]
		\hline

	\end{longtable}
\end{scriptsize}

\bigskip

In Table \ref{table:sparco}, we report the numerical results obtained by {\sc Ssnal}, mfIPM, FPC\_AS, APG and LADMM in solving various instances of the Lasso problem (\ref{eq-l1}). For simplicity, we only test two cases with $\lambda_{c} = 10^{-3}$ and $10^{-4}$.
We can observe that FPC\_AS performs very well when it succeeds in obtaining a solution with the desired accuracy. However it is not robust in that it fails to solve 4 out  of 8 and 5 out of 8 problems when $\lambda_c = 10^{-3}$ and $10^{-4}$, respectively. For a few cases, FPC\_AS can only achieve a poor accuracy ($10^{-1}$).
The same 
{non-robustness}
also appears in the performance of APG.  This 
{non-robustness}
 is in fact closely related to the value of $\lambda_{\max}(\cA\cA^*)$. For example, both FPC\_AS and APG fail to solve a 
{rather}
small problem {\bf blknheavi} ($m = n =1024$) 
{whose}
 corresponding $\lambda_{\max}(\cA\cA^*) = 709$.
On the other hand,
LADMM, \SSNAL and mfIPM can solve all the test instances successfully. Nevertheless, in some cases, LADMM requires much more time than {\sc Ssnal}. One can also observe that for large scale problems, \SSNAL outperforms mfIPM by a large margin (sometimes up to a factor of 100). This also demonstrates the power of \SSNCG based augmented Lagrangian methods over interior-point methods in solving large scale problems.
Due to the high sensitivity of the first order algorithms
{to $\lambda_{\max}(\cA\cA^*)$}, one can safely conclude that the first order algorithms can only be used to solve relatively easy problems. Moreover, in order to obtain efficient and robust algorithms for Lasso problems or more general convex composite optimization problems, it is necessary to carefully 
{exploit}
the second order information in the algorithmic design.

\begin{scriptsize}
	\begin{longtable}{|  l  | c | c | c| r|}
		\caption{The performance of {\sc Ssnal}, mfIPM, FPC\_AS, APG and LADMM on 8 selected sparco problems (accuracy $\epsilon= 10^{-6}$, noise $60\textup{dB}$). $m$ is the sample size and $n$ is the dimension of features. In the table, ``a'' = {\sc Ssnal}, ``b'' = mfIPM, ``c'' = FPC\_AS, ``d'' = APG and ``e'' = LADMM, respectively. ``nnz'' denotes the number of nonzeros in the solution obtained by {\sc Ssnal}. The computation time is in the format of ``hours:minutes:seconds''.}\label{table:sparco}
		\\
		\hline
		\mc{1}{|c|}{} &\mc{1}{c|}{} &\mc{1}{c|}{} &\mc{1}{c|}{}&\mc{1}{c|}{}\\[-5pt]
		\mc{1}{|c|}{} & \mc{1}{c|}{$\lambda_c$} &\mc{1}{c|}{nnz} &\mc{1}{c|}{$\eta$} &\mc{1}{c|}{time} \\[2pt] \hline
		\mc{1}{|c|}{probname}  &\mc{1}{c|}{} &\mc{1}{c|}{}
		&\mc{1}{c|}{a $|$ b $|$ c $|$ d $|$ e  } &\mc{1}{c|}{a $|$ b $|$ c $|$ d $|$ e } \\[2pt]
		\mc{1}{|c|}{$m;n$} &\mc{1}{c|}{ } &\mc{1}{c|}{}
		&\mc{1}{c|}{ } &\mc{1}{c|}{ } \\ \hline
		\endhead
blknheavi
& $10^{-3}$ & 12 	 &   5.7-7 $|$    9.2-7 $|$    {\red{1.3-1}} $|$    {\red{ 2.0-6}} $|$    9.9-7 	 &01 $|$ 01 $|$ 55 $|$ 08 $|$ 06\\[2pt]
1024$;$1024& $10^{-4}$ & 12 	 &   9.2-8 $|$    8.7-7 $|$    {\red{4.6-3}} $|$    {\red{ 8.5-5}} $|$    9.9-7 	 &01 $|$ 01 $|$ 49 $|$ 07 $|$ 07\\[2pt]
\hline
srcsep1
& $10^{-3}$ & 14066 	 &   1.6-7 $|$    7.3-7 $|$    9.7-7 $|$    8.7-7 $|$    9.7-7 	 &5:41 $|$ 42:34 $|$ 13:25 $|$ 1:56 $|$ 4:16\\[2pt]
29166$;$57344& $10^{-4}$ & 19306 	 &   9.8-7 $|$    9.5-7 $|$    9.9-7 $|$    9.9-7 $|$    9.5-7 	 &9:28 $|$   3:31:08 $|$ 32:28 $|$ 2:50 $|$ 13:06\\[2pt]
\hline
srcsep2
& $10^{-3}$ & 16502 	 &   3.9-7 $|$    6.8-7 $|$    9.9-7 $|$    9.7-7 $|$    9.8-7 	 &9:51 $|$   1:01:10 $|$ 16:27 $|$ 2:57 $|$ 8:49\\[2pt]
29166$;$86016& $10^{-4}$ & 22315 	 &   7.9-7 $|$    9.5-7 $|$    {\red{1.0-3}} $|$    9.6-7 $|$    9.5-7 	 &19:14 $|$   6:40:21 $|$   2:01:06 $|$ 4:56 $|$ 16:01\\[2pt]
\hline
srcsep3
& $10^{-3}$ & 27314 	 &   6.1-7 $|$    9.6-7 $|$    9.9-7 $|$    9.6-7 $|$    9.9-7 	 &33 $|$ 6:24 $|$ 8:51 $|$ 47 $|$ 49\\[2pt]
196608$;$196608& $10^{-4}$ & 83785 	 &   9.7-7 $|$    9.9-7 $|$    9.9-7 $|$    9.9-7 $|$    9.9-7 	 &2:03 $|$   1:42:59 $|$ 3:40 $|$ 1:15 $|$ 3:06\\[2pt]
\hline
soccer1
& $10^{-3}$ &  4 	 &   1.8-7 $|$    6.3-7 $|$    {\red{5.2-1}} $|$    8.4-7 $|$    9.9-7 	 &01 $|$ 03 $|$ 13:51 $|$ 2:35 $|$ 02\\[2pt]
3200$;$4096& $10^{-4}$ &  8 	 &   8.7-7 $|$    4.3-7 $|$    {\red{5.2-1}} $|$    {\red{ 3.3-6}} $|$    9.6-7 	 &01 $|$ 02 $|$ 13:23 $|$ 3:07 $|$ 02\\[2pt]
\hline
soccer2
& $10^{-3}$ &  4 	 &   3.4-7 $|$    6.3-7 $|$    {\red{5.0-1}} $|$    8.2-7 $|$    9.9-7 	 &00 $|$ 03 $|$ 13:46 $|$ 1:40 $|$ 02\\[2pt]
3200$;$4096& $10^{-4}$ &  8 	 &   2.1-7 $|$    1.4-7 $|$    {\red{6.8-1}} $|$    {\red{ 1.8-6}} $|$    9.1-7 	 &01 $|$ 03 $|$ 13:27 $|$ 3:07 $|$ 02\\[2pt]
\hline
blurrycam
& $10^{-3}$ & 1694 	 &   1.9-7 $|$    6.5-7 $|$    3.6-8 $|$    4.1-7 $|$    9.4-7 	 &03 $|$ 09 $|$ 03 $|$ 02 $|$ 07\\[2pt]
65536$;$65536& $10^{-4}$ & 5630 	 &   1.0-7 $|$    9.7-7 $|$    1.3-7 $|$    9.7-7 $|$    9.9-7 	 &05 $|$ 1:35 $|$ 08 $|$ 03 $|$ 29\\[2pt]
\hline
blurspike
& $10^{-3}$ & 1954 	 &   3.1-7 $|$    9.5-7 $|$    {\red{7.4-4}} $|$    9.9-7 $|$    9.9-7 	 &03 $|$ 05 $|$ 6:38 $|$ 03 $|$ 27\\[2pt]
16384$;$16384& $10^{-4}$ & 11698 	 &   3.5-7 $|$    7.4-7 $|$    {\red{ 8.3-5}} $|$    9.8-7 $|$    9.9-7 	 &10 $|$ 08 $|$ 6:43 $|$ 05 $|$ 35\\[2pt]
\hline

	\end{longtable}
\end{scriptsize}

\section{Conclusion}
In this paper, we have proposed an inexact augmented Lagrangian method of an asymptotic superlinear convergence rate for solving the large scale convex composite optimization problems of the form ({\bf P}).
It is particularly well suited for solving $\ell_1$-regularized {least squares (LS)} problems.
With the {intelligent} incorporation of the semismooth Newton method, our algorithm \SSNAL is able to fully exploit the second order sparsity of the problems.
Numerical results have convincingly demonstrated the superior efficiency and robustness
of our algorithm in
solving large scale $\ell_1$-regularized LS problems.
Based on extensive numerical evidence, we firmly believe that our
algorithmic framework can be adapted to design robust and efficient solvers for various large scale
{convex composite} optimization problems.

\section*{ Acknowledgments}
{The authors would like to thank the anonymous referees for carefully reading our work and for their helpful suggestions.}
The authors would also like to thank Dr. Ying Cui at National University of Singapore and Dr. Chao Ding at Chinese Academy of Sciences for numerous discussions on the error bound conditions and the metric subregularity.

\small
{

}
\end{document}